\theoremstyle{plain}
\newtheorem{thm}{Theorem}[section]
\newtheorem{lemma}[thm]{Lemma}
\newtheorem{proposition}[thm]{Proposition}
\newtheorem{corollary}[thm]{Corollary}
\newtheorem{question}[thm]{Question}
\theoremstyle{definition}
\newtheorem{remark}[thm]{Remark}
\newtheorem{definition}[thm]{Definition}
\newtheorem{example}[thm]{Example}
\numberwithin{equation}{section}
\title{Dynamics of quandle orders}
\author{Chihaya Jibiki}
\address{Department of Mathematics, School of Science, Institute of Science Tokyo, 2-12-1 Ookayama, Meguro-ku, Tokyo 152-8550 Japan}
\email{chihaya.j@gmail.com}
\date{}
\keywords{Quandle, order, dynamical system}
\begin{document}
\maketitle

\begin{abstract}
We construct a continuous map from the space of orders on quandles to the space of quandle actions on one-manifolds, providing an answer to a question posed by Idrissa Ba and Mohamed Elhamdadi. As an application of this map, we characterize isolated orders on quandles in terms of strong rigidity. As another application, we prove that there is no isolated right order on the free quandles, except for specific cases.
\end{abstract}


\section{Introduction}

Quandles are algebraic structures whose axioms correspond to the Reidemeister moves in knot theory. Certain quandles, called keis, were first introduced by Mituhisa Takasaki \cite{Takasaki1943AbstractionOS} to find a distributive algebraic structure which captures the notion of reflection in the context of finite geometry, in comparison with the symmetric space in differential geometry. In the 1980's, Joyce \cite{MR638121} and Matveev \cite{MR672410} independently provided the modern definition of quandles and developed their theory, particularly in relation to knot theory. They introduced the fundamental quandle of a knot, as a complete invariant up to reversal and mirror image. That is, two knots $K$ and $K'$ are equivalent (up to reversal and mirror image) if and only if their fundamental quandles $Q(K)$ and $Q(K')$ are isomorphic as quandles. Since then, quandles have been studied from various topological and algebraic perspectives.

Given a set with a binary operation, right orders and circular orders can be defined on the set\footnote{Left orders can also be defined, but in this paper, we focus on right orders. Note that left orderability is not equivalent to right orderability on quandles.}. Orderability was first introduced in group theory by R. Dedekind and O. H\"{o}lder at the end of the 19th century and the beginning of 20th century, respectively. Dedekind characterized the real numbers as a complete ordered Abelian group, while H\"{o}lder proved that any Archimedean Abelian ordered group is order isomorphic to a subgroup of the additive real numbers with the standard order. Recently, the orderability of quandles has been well studied \cite{MR4330281,MR4450681,ba2023knot,ba2022circularorderabilityquandles}, revealing relationships between orderability and the fundamental quandles of knots.

The sets of all right orders and circular orders on a set $X$ form topological spaces $RO(X)$ and $RCO(X)$, respectively\footnote{In the quandle case, these spaces are denoted by $RQ(Q)$ and $RCQ(Q)$ for each quandle $Q$.}. Over the past few decades, several researchers have explored the properties of these spaces. Linnell \cite{MR2765563} studied their cardinality; Sikora \cite{MR2069015}, Dabkowska et al. \cite{MR2320157} and Ba and Elhamdadi \cite{ba2022circularorderabilityquandles} proved that they are compact; Ha and Harizanov \cite{MR3835743} investigated their algebraic and computability-theoretic properties. It follows that these spaces are Cantor sets unless they contain isolated points, called the isolated orders in both the right and circular order cases. Isolated orders have been studied in many groups; for example, braid groups \cite{MR1859702} and torus knot groups \cite{MR2745552} have isolated orders while virtually solvable groups and surface groups \cite{MR3460331} do not. These are just a few examples.

In the  case of groups, a powerful tool for studying the set of orders is dynamical realization, given by a continuous map $R:CO(G)\to \textrm{Hom}(G,\textrm{Homeo}(S^1))$ or $R:RO(G)\to \textrm{Hom}(G,\textrm{Homeo}(\mathbb{R}))$. In several works on isolated orders, dynamical realizations play a major role. Furthermore, they connect the theory of orders to various topological structures, including surface groups \cite{MR3664524}, triangle groups \cite{MR4584769}, mapping class groups \cite{MR4266358,MR1805402} and foliations \cite{MR1965363}.

In Section \ref{sec:3}, we construct a dynamical realization in the quandle case.

\begin{thm}\label{thm:main1}
    Let $Q$ be a countable, circular (resp. right) orderable quandle which is semi-latin at $\hat{q}\in Q$. There exist two continuous maps:
    \begin{align}
        &R:RCQ(Q)\to \textrm{Hom}(Q,\textrm{Conj}(\textrm{Homeo}_+(S^1)))\\
        &(\textrm{resp.}\,R:RQ(Q)\to \textrm{Hom}(Q,\textrm{Conj}(\textrm{Homeo}_+(\mathbb{R})))).
    \end{align}
     Moreover, both actions of each image of $R$ are faithful.
\end{thm}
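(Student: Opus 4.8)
The plan is to transplant the dynamical realization for left‑orderable groups, with right multiplications $R_q\colon x\mapsto x*q$ playing the role of left translations, right self‑distributivity supplying the conjugation‑quandle structure, and the semi‑latin hypothesis forced in only to guarantee faithfulness. Using countability, fix an enumeration $Q=\{q_0,q_1,q_2,\dots\}$ with $q_0=\hat q$. Given a right order $\preceq$ on $Q$, build an order‑embedding $\tau_\preceq\colon Q\hookrightarrow\mathbb R$ by the usual inductive recipe: set $\tau_\preceq(q_0)=0$; having placed $q_0,\dots,q_{n-1}$, place $q_n$ at distance $1$ to the right of all of them if it is $\preceq$‑larger than each, at distance $1$ to the left if it is $\preceq$‑smaller, and at the midpoint of its two nearest already‑placed $\preceq$‑neighbours otherwise. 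The positions of $q_0,\dots,q_n$ depend only on the restriction of $\preceq$ to $\{q_0,\dots,q_n\}$ — the key point for continuity. In the circular case, replace this by the analogous recipe producing a cyclic‑order‑preserving embedding $\tau_\preceq\colon Q\hookrightarrow S^1$.

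Because $\preceq$ is right‑invariant, each $R_q$ (bijective since $Q$ is a quandle, order‑preserving by right‑invariance) is an order‑automorphism of $(Q,\preceq)$; hence $\tau_\preceq\circ R_q\circ\tau_\preceq^{-1}$ is an order‑automorphism of $\tau_\preceq(Q)$, which extends canonically to $f_q^\preceq\in\mathrm{Homeo}_+(\mathbb R)$ — by continuity over $\overline{\tau_\preceq(Q)}$, affinely over the bounded components of the complement, and by the identity over the unbounded ones — exactly as in the group case (in the circular case, extend affinely over each complementary arc of $S^1$). Set $R(\preceq)(q):=f_q^\preceq$, or $(f_q^\preceq)^{-1}$, according to the sign convention the paper uses for $\mathrm{Conj}$. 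The only algebraic input is the identity $R_b R_a R_b^{-1}=R_{a*b}$, a restatement of self‑distributivity $(x*a)*b=(x*b)*(a*b)$; applied to the homeomorphisms $f_q^\preceq$ it shows that $R(\preceq)$ is a quandle homomorphism $Q\to\mathrm{Conj}(\mathrm{Homeo}_+(\mathbb R))$ (resp.\ $\to\mathrm{Conj}(\mathrm{Homeo}_+(S^1))$). For continuity of $R$: if $\preceq$ and $\preceq'$ agree on $\{q_0,\dots,q_N\}$ then $\tau_\preceq$ and $\tau_{\preceq'}$ agree there, so for each fixed $q$ the homeomorphisms $f_q^\preceq$ and $f_q^{\preceq'}$ agree on a common finite subset of $\mathbb R$ (resp.\ $S^1$) that becomes finer and covers a larger range as $N\to\infty$; being orientation‑preserving and agreeing at these common points, they are uniformly close on any fixed compact set. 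This is the standard continuity argument for dynamical realizations and yields continuity of $R$ into $\mathrm{Hom}(Q,\mathrm{Conj}(\mathrm{Homeo}_+(\mathbb R)))$ with the subspace topology of the product of the compact‑open topologies.

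The remaining point, faithfulness, is where being semi‑latin at $\hat q$ is used, and it is the one genuinely quandle‑specific step: a group acts faithfully on itself by translation for free, but a quandle need not act faithfully on itself by right multiplications, so a hypothesis of this kind is unavoidable. Since $R(\preceq)(q)$ restricts via $\tau_\preceq$ to $R_q^{\pm1}$ on $\tau_\preceq(Q)$, injectivity of the homomorphism $R(\preceq)$ is equivalent to $R_q\ne R_r$ as self‑maps of $Q$ whenever $q\ne r$. Being semi‑latin at $\hat q$ means the left multiplication $L_{\hat q}\colon x\mapsto\hat q*x$ is injective; since $R_q(\hat q)=\hat q*q=L_{\hat q}(q)$, distinct $q,r$ give $R_q(\hat q)\ne R_r(\hat q)$ and hence $R_q\ne R_r$. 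Thus every homomorphism in the image of $R$ is injective, i.e.\ the associated action on $\mathbb R$ (resp.\ $S^1$) is faithful. The circular case runs in parallel; the only extra care, entirely standard, is in constructing and extending the cyclic embedding $Q\hookrightarrow S^1$, while the homomorphism property, continuity, and faithfulness arguments are verbatim the same.
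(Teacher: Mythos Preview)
Your construction of the order-embedding, the action via right multiplications, the verification that $q\mapsto f_q^\preceq$ is a quandle homomorphism through the identity $R_bR_aR_b^{-1}=R_{a*b}$, and the use of the semi-latin hypothesis at $\hat q$ for faithfulness all match the paper's approach (its Lemmas in Section~\ref{sec:3} and the proposition on the free action at $\iota(\hat q)$).

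The genuine gap is in your continuity argument. You assert that when $\preceq$ and $\preceq'$ agree on $\{q_0,\dots,q_N\}$ the homeomorphisms $f_q^\preceq$ and $f_q^{\preceq'}$ agree on a finite set that ``becomes finer and covers a larger range as $N\to\infty$'', hence are uniformly close. But that finite set is contained in $\tau_\preceq(Q)$, and the midpoint recipe gives no reason for $\tau_\preceq(Q)$ to be dense in $S^1$ (or in $\mathbb R$): persistent gaps of fixed positive length can remain in $S^1\setminus\overline{\tau_\preceq(Q)}$. On such a gap $(a,b)$ the map $f_q^\preceq$ is affine, whereas $f_q^{\preceq'}$ is determined by where $\tau_{\preceq'}$ places the elements $q_j$ with $j>N$, some of which may land inside $(a,b)$ and be sent anywhere in the corresponding image gap. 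Two monotone circle maps agreeing on a finite set $P$ have sup-distance bounded only by the maximal gap of the image $f(P)$; here that bound does not tend to zero. So your argument, as written, controls $\|f_q^\preceq-f_q^{\preceq'}\|_\infty$ only up to the maximal gap of $\overline{\tau_\preceq(Q)}$, which is not enough.

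The paper sidesteps exactly this difficulty by the conjugation trick from Mann--Rivas: given $c_0$ and a target neighborhood $O_{(F,\varepsilon)}(\rho_0)$, it takes a finite $S\supset F$ with $\#S>1/\varepsilon$ and \emph{conjugates} $\rho_0$ so that the finite orbit $x_0^{\rho_0(S)}$ is equispaced in $S^1$ with spacing less than $\varepsilon$. For any $c$ agreeing with $c_0$ on $\hat q*S*S\cup\hat q*S*^{-1}S$ one then builds a realization $\rho$ matching $\rho_0$ on that finite orbit and runs a two-case interval argument (splitting on whether $I^{\rho(s)}$ has length at most $\varepsilon$ or greater, and using $\bar\rho(s)$ in the second case) to conclude $\rho\in O_{(S,\varepsilon)}(\rho_0)$. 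The conjugation is precisely what forces the controlled finite set to be $\varepsilon$-dense regardless of whether the full orbit is; this is the idea missing from your proof.
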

This provides an answer to \cite[Question 4.3]{ba2022circularorderabilityquandles}.

In Section \ref{sec:4}, using the theorem above, we characterize isolated orders. This is a reformulation of Mann and Rivas' \cite[Theorem 1.2]{MR3887426} result for groups, applied to quandles. The corresponding result for right orders is given in Theorem \ref{thm:main2.5}.

\begin{thm}\label{thm:main2}
    Let $G$ be a countable group, $A$ a subset of $G$ and $\mathcal{D}(A)$ the Dehn quandle which is semi-latin at $\hat{q}\in \mathcal{D}({}^GA)$. Let $c$ be a circular order on $\mathcal{D}({}^GA)$ whose dynamical realization $\rho$ can be extended to a group action of $G$.
    
    Then, $c$ is isolated if and only if $\rho$ is strongly rigid in the following sense: for every $\rho'$ sufficiently close to $\rho$ in $\textrm{Hom}(\mathcal{D}({}^GA),\textrm{Conj}(\textrm{Homeo}_+(S^1)))$ there exists a continuous, degree-one monotone map $h:S^1\to S^1$ fixing the basepoint $x_0$ of the realization for $\hat{q}$, and such that $\rho(q)\circ h=h\circ \rho'(q)$ for all $q\in \mathcal{D}({}^GA)$.
\end{thm}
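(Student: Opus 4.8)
The plan is to transport the proof of \cite[Theorem 1.2]{MR3887426} to the quandle setting, using Theorem \ref{thm:main1} in place of the group dynamical realization and using the hypothesis that $\rho$ extends to $G$ wherever the original argument is genuinely group-theoretic. Write $Q=\mathcal{D}(A^G)$, $\rho=R(c)$, let $\bar\rho\colon G\to\textrm{Homeo}_+(S^1)$ be the assumed extension of $\rho$, let $x_0$ be the basepoint of the realization attached to $\hat q$, and set $\tau(q)=\rho(q)(x_0)$ (and $\tau'(q)=\rho'(q)(x_0)$ for a perturbation $\rho'$). From the construction in Section \ref{sec:3} I will use the standard features of a dynamical realization: $R$ is continuous; $\rho(\hat q)$ fixes $x_0$; for every circular order $d$ on $Q$ the orbit map of $R(d)$ is injective and the pullback along it of the standard circular order of $S^1$ is $d$; and, after collapsing wandering intervals (which does not change the semi-conjugacy class), the orbit of $x_0$ is dense in $S^1$. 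I will also use the elementary reformulation of isolatedness: $c$ is isolated in $RCQ(Q)$ if and only if there is a finite $F\subseteq Q$ such that $c$ is the unique circular order on $Q$ whose orientation cocycle agrees with that of $c$ on every triple from $F$ — this is just unwinding the topology of $RCQ(Q)$.

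For ``$\rho$ strongly rigid $\Rightarrow$ $c$ isolated'', let $c'$ lie in a small neighbourhood of $c$; by continuity, $\rho':=R(c')$ is close to $\rho$, so strong rigidity gives a degree-one monotone $h\colon S^1\to S^1$ with $h(x_0)=x_0$ and $h\circ\rho'(q)=\rho(q)\circ h$ for all $q\in Q$. Evaluating at $x_0$ gives $h\circ\tau'=\tau$. Since $\tau$ is injective, $h$ is injective on $\tau'(Q)$, and a degree-one monotone map is weakly order-preserving, so $h$ preserves the circular order on $\tau'(Q)$; hence the pullback of the standard order along $\tau'$ coincides with the pullback along $h\circ\tau'=\tau$, that is $c'=c$. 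So a neighbourhood of $c$ reduces to $\{c\}$ and $c$ is isolated.

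For ``$c$ isolated $\Rightarrow$ $\rho$ strongly rigid'', fix the finite $F\subseteq Q$ from the reformulation and let $\rho'$ be close to $\rho$ in $\textrm{Hom}(Q,\textrm{Conj}(\textrm{Homeo}_+(S^1)))$. The finitely many distinct points $\tau(F)$ sit in a definite circular arrangement encoding $c|_F$, so if $\rho'$ is close enough to $\rho$ on the elements of $F$ then $\tau'(F)$ lies in the same arrangement and the ternary relation that $\rho'$ induces on $Q$ via $q\mapsto\tau'(q)$ agrees with $c$ on $F$. A degree-one monotone $h$ fixing $x_0$ with $h\circ\rho'(q)=\rho(q)\circ h$ for all $q\in Q$ exists (and is then forced to be the monotone extension of $\rho'(q)(x_0)\mapsto\rho(q)(x_0)$ along the orbit of $x_0$) precisely when that induced relation is a degeneration of $c$, and not a genuine reversal of $c$ on some triple outside $F$. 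Excluding reversals is the point where \cite[Theorem 1.2]{MR3887426} is really used: minimality of the dynamical realization together with the finite-determination property of $c$ propagates the agreement on $F$ to agreement on all of $Q$; here the semi-latin hypothesis at $\hat q$ keeps the realization well behaved, and the extension $\bar\rho$ supplies the group-level argument that carries out the propagation. Once reversals are excluded, $h$ intertwines $\rho'$ and $\rho$ on the dense orbit, hence on all of $S^1$.

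I expect the reversal-exclusion in the second direction to be the main obstacle: a quandle action $\rho'$ near $\rho$ need not itself extend to a $G$-action, so one cannot simply feed it into the theorem for $G$, and one must instead re-run the finite-determination and ping-pong heart of Mann and Rivas' argument with every word expressed through the Dehn-quandle operation $x*y=y^{-1}xy$ and through the fixed extension $\bar\rho$. That is exactly where the two standing hypotheses — semi-latin at $\hat q$ and the extendability of $\rho$ to $G$ — are genuinely needed.
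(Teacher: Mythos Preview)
Your overall strategy (transport Mann--Rivas) matches the paper's, but there is a concrete gap that is specific to the quandle setting and that you overlook. You assert that for any circular order $d$ the orbit map $\tau(q)=R(d)(q)(x_0)$ pulls back the standard order of $S^1$ to $d$. In the quandle realization one has $\tau(q)=\iota(\hat q*q)$, not $\iota(q)$, so the pullback is $(q_1,q_2,q_3)\mapsto d(\hat q*q_1,\hat q*q_2,\hat q*q_3)$, which is \emph{not} $d$ unless $\hat q$ happens to be left-invariant. The paper confronts this at the start of Section~\ref{sec:4} by introducing the \emph{index} $\lambda$ of $c$ at $\hat q$ (Remark~\ref{rem:restore}), and all comparisons of orders coming from orbits are made ``via $\lambda$'' (Lemma~\ref{lem:same order}). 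Your argument for ``rigid $\Rightarrow$ isolated'' breaks exactly here: from $h\circ\tau'=\tau$ you only get $c'(\hat q*\,\cdot\,)=c(\hat q*\,\cdot\,)$ on the image of $L_{\hat q}$, which need not be all of $Q$; the paper's route is Lemma~\ref{lem:conti} followed by Lemma~\ref{lem:same order}, both of which carry the index along.

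In the ``isolated $\Rightarrow$ rigid'' direction your sketch is also structurally different from the paper and misses the essential case split. You frame the difficulty as ``excluding reversals'' for triples outside $F$ and propose to re-run a ping-pong argument through $\bar\rho$. The paper does something else: for a perturbation $\rho'$, it distinguishes whether $x_0$ has trivial stabilizer under $\rho'$. If it does, the order induced by the $\rho'$-orbit via $\lambda$ agrees with $c$ on $F$, hence equals $c$ by isolatedness, and Lemma~\ref{lem:same order} (whose proof is where the extension of $\rho$ to a $G$-action is genuinely used, via products $gg'$ rather than $g*g'$) supplies the semi-conjugacy $h$. If the stabilizer is nontrivial---your ``degeneration'' case---the paper does \emph{not} build $h$ directly; instead Lemma~\ref{lem:extend} produces at least two distinct circular orders on $Q$ extending the degenerate orbit order, both agreeing with $c$ on $F$, contradicting the isolatedness of $c$. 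Your claim that ``$h$ exists whenever the induced relation is a degeneration'' is therefore not how the argument goes, and the non-free case needs this separate contradiction rather than a propagation argument.
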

In particular, we provide a complete characterization of isolated orders on the Dehn quandles generated by free groups.

\begin{corollary}\label{cor:main2}
    Let $G$ be a countable group, $F(X)$ the free group generated by $X$ and $F(X)\star G$ their free product. Then, a circular (resp. right) order on the Dehn quandle $\mathcal{D}({}^{F(X)\star G}X)$ is isolated if and only if its dynamical realization $\rho$ is strongly rigid.
\end{corollary}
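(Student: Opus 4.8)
The plan is to deduce Corollary~\ref{cor:main2} from Theorem~\ref{thm:main2} (for circular orders) and Theorem~\ref{thm:main2.5} (for right orders) by showing that, for the quandle $\mathcal{D}(X^{F(X)\star G})$, the extension hypothesis appearing there is automatically satisfied. The starting observation is that $\mathcal{D}(X^{F(X)\star G})$ is a \emph{free quandle}. Let $N$ be the normal closure of $X$ in $F(X)\star G$, that is, the kernel of the canonical retraction $F(X)\star G\twoheadrightarrow G$ killing $X$. A standard Reidemeister--Schreier computation, using a prefix-closed transversal supported in the copy of $G$, shows that $N$ is a free group with basis $Y:=\{\,gxg^{-1}\mid g\in G,\ x\in X\,\}$: every Schreier generator attached to a generator of $G$ is trivial, so the rewritten relators of $G$ vanish. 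A short set-theoretic check then gives $X^{F(X)\star G}=Y^{N}$, and since the Dehn-quandle operation is conjugation by an element of the underlying set --- which here lies in $N$ --- we obtain $\mathcal{D}(X^{F(X)\star G})=\mathcal{D}(Y^{N})=\mathcal{D}(Y^{F(Y)})$, the free quandle on $Y$. Note that $N=F(Y)$ is countable because $X$ and $G$ are, and that the two $\textrm{Hom}$-spaces occurring in Theorem~\ref{thm:main2} and in Corollary~\ref{cor:main2} coincide.

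Granting this, I would apply Theorem~\ref{thm:main2} (resp.\ Theorem~\ref{thm:main2.5}) to the group $N=F(Y)$ and the subset $Y$, verifying its two hypotheses. First, $\mathcal{D}(Y^{F(Y)})$ is semi-latin at every element: one checks directly, using normal forms in $F(Y)$ and the fact that the centraliser of a basis element is the cyclic subgroup it generates, that every left translation of the free quandle is injective; fix any $\hat{q}\in Y$. Second, the dynamical realization $\rho$ of a given circular (resp.\ right) order on $\mathcal{D}(X^{F(X)\star G})=\mathcal{D}(Y^{F(Y)})$ extends to a group action of $N$: this is precisely the universal property of the free quandle, whose adjoint group is $F(Y)$. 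Concretely, any quandle homomorphism $\mathcal{D}(Y^{F(Y)})\to\textrm{Conj}(H)$ extends uniquely to a group homomorphism $F(Y)\to H$ determined by the images of the free generators $Y$, the well-definedness being the same centraliser statement; taking $H=\textrm{Homeo}_+(S^1)$ (resp.\ $\textrm{Homeo}_+(\mathbb{R})$) yields the desired extension of $\rho$.

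With both hypotheses in hand, Theorem~\ref{thm:main2} (resp.\ Theorem~\ref{thm:main2.5}) applied to $\mathcal{D}(Y^{N})$ states exactly that the order is isolated if and only if $\rho$ is strongly rigid, and since $\mathcal{D}(Y^{N})=\mathcal{D}(X^{F(X)\star G})$ this is the assertion of the corollary. The only step carrying genuine content --- and the one I expect to require the most care --- is the first one: getting the Reidemeister--Schreier identification of $N$ with $F(Y)$ and of $X^{F(X)\star G}$ with $Y^{N}$ right, and matching up the quandle operations, so as to recognise $\mathcal{D}(X^{F(X)\star G})$ as a free quandle. Once that is in place the rest is formal, the key point being that a \emph{free} quandle automatically meets the ``realization extends to a group action'' hypothesis of Theorem~\ref{thm:main2}, which is why that hypothesis can be dropped in Corollary~\ref{cor:main2}.
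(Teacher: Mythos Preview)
Your argument is correct, and in fact it is more careful than the paper's own (very terse) justification. The paper's entire proof of Corollary~\ref{cor:main2} is the sentence ``We confirm that the condition of Theorem~\ref{thm:main2} can be changed to a group condition. This proves Corollary~\ref{cor:main2},'' followed by Proposition~\ref{prop:Dehn}, which gives group-theoretic criteria for a Dehn quandle $\mathcal{D}(A^{G'})$ to be semi-latin. The paper thus verifies semi-latinity by applying Proposition~\ref{prop:Dehn} directly to the pair $(F(X)\star G,\,X)$, but it does not explicitly address the other hypothesis of Theorem~\ref{thm:main2}, namely that the dynamical realization extends to a group action of the ambient group.

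Your proposal fills exactly this gap, and does so via the right mechanism: you change the ambient group from $F(X)\star G$ to its normal subgroup $N=\ker(F(X)\star G\twoheadrightarrow G)$, identify $\mathcal{D}(X^{F(X)\star G})$ with $\mathcal{D}(Y^{N})$ for $Y=\{gxg^{-1}:g\in G,\,x\in X\}$, and use Reidemeister--Schreier to see $N=F(Y)$. This makes the quandle a \emph{free} quandle on $Y$, whose adjoint group is $F(Y)=N$, so the universal property extends any quandle action to a group action of $N$ automatically. This is the point the paper leaves implicit. Note that the extension to the larger group $F(X)\star G$ need \emph{not} exist (e.g.\ when $G$ has torsion one would need involutions in $\textrm{Homeo}_+(S^1)$ conjugating prescribed elements), so your passage to $N$ is genuinely needed, not merely cosmetic.

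In short: same approach in spirit, but you make explicit the identification $\mathcal{D}(X^{F(X)\star G})\cong FQ(Y)$ and the resulting automatic extension to $F(Y)$, whereas the paper only records the semi-latin check via Proposition~\ref{prop:Dehn} and leaves the rest tacit. Your version is the cleaner one.
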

However, Theorem \ref{thm:main2} assumes specific conditions for quandles.

\begin{question}
    Is there a complete characterization of isolated orders on quandles which are not either Dehn quandles or semi-latin, based on quandle actions? 
\end{question}

In terms of sets having the universal property of freeness, it is known that free groups do not admit any isolated circular order in the odd-rank case \cite{MR4033501}, any isolated right orders \cite{MR787955} or any isolated bi-orders \cite{MR4584679} in any rank. However, free groups do admit isolated circular orders in the even-rank case. In the quandle case, it is known that the conjugation quandle of the countably infinite rank free group does not admit any isolated right order \cite{MR3835743}. In Section \ref{sec:5}, we show that no order is isolated on free quandles\footnote{Note that any free quandle is a subquandle of some conjugation quandle of a free group.}, except for two specific cases.

\begin{thm}\label{thm:main3}
    Let $n\geq2$, $X=\{x_1,\dots,x_n\}$ be a set and $FQ(X)$ the free quandle generated by $X$. Then, any right order on $FQ(X)$ is non-isolated order, except for the following two types of right orders $<\in RQ(FQ(X))$.
    \begin{enumerate}[label=\textbf{E\arabic*}]
    \item For any $\hat{q}\in FQ(X)$, there exist elements $M,m\in FQ(X)$, for any $q\in FQ(X)$ such that $\hat{q}*m<\hat{q}*q<\hat{q}*M$.
    \item For any $\hat{q}\in FQ(X)$, there exist $M,m\in FQ(X)$, for any ${}^Ux_i,{}^Vx_j\in FQ(X)$, such that
    \begin{align}
        &\hat{q}*M<\hat{q}*{}^Ux_i<\hat{q}*{}^Vx_j \textrm{\,\,implies\,\,}i=j,\textrm{\, and}\\
        &\hat{q}*{}^Ux_i<\hat{q}*{}^Vx_j<\hat{q}*m \textrm{\,\,implies\,\,}i=j.
    \end{align}
    \end{enumerate}
\end{thm}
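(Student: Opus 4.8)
The plan is to derive Theorem \ref{thm:main3} from the dynamical characterisation of isolated orders (Corollary \ref{cor:main2}, right-order case; see also Theorem \ref{thm:main2.5}) together with a flexibility argument for actions of free groups on $\mathbb{R}$.

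First I would fix the model. The free quandle $FQ(X)$ is the Dehn quandle $\mathcal{D}(X^{F(X)})$ obtained by taking the trivial group in $F(X)\star G$; it is semi-latin at every point, its inner automorphism group is $F(X)$ acting by right translation on $\bigsqcup_{i=1}^{n}\langle x_i\rangle\backslash F(X)$, and hence a right order on $FQ(X)$ is exactly an $F(X)$-invariant total order on this disjoint union. For these quandles a homomorphism $\rho$ into $\textrm{Conj}(\textrm{Homeo}_+(\mathbb{R}))$ is the same datum as a genuine action $\tilde\rho\colon F(X)\to\textrm{Homeo}_+(\mathbb{R})$ (via $\rho(x_i^w)=\tilde\rho(w)^{-1}\tilde\rho(x_i)\tilde\rho(w)$), and by Corollary \ref{cor:main2} a right order $<$ on $FQ(X)$ is isolated if and only if its dynamical realization $\rho$ is strongly rigid. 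Under the realization the ordered column $\{\hat q*q:q\in FQ(X)\}$ is embedded into $\mathbb{R}$ with $\hat q$ at the basepoint $x_0$, each column element $\hat q*q$ remembering the generator-colour of $q$; reading this off, E1 says that every such embedded column is bounded, and E2 says that every such column is, near each of its two ends, supported on a single colour.

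So it suffices to show: if $<$ is of neither exceptional type, then $\rho$ is not strongly rigid. Granting a reference point $\hat q$ and an end of its column --- say the $+\infty$ end --- which is unbounded (since $<$ is not E1) and carries two distinct colours $i\neq j$ cofinally (since $<$ is not E2), I would locate, far to the right in $\mathbb{R}$, a bounded open interval that is free for a ping-pong between $\rho(q)$ and $\rho(q')$ for suitable $q=x_i^{U}$, $q'=x_j^{V}$ with $\rho(q)(x_0)$ and $\rho(q')(x_0)$ both very large; here one uses freeness of $F(X)$ together with discreteness of the embedded column. Then perturb the generators by post-composition, $\tilde\rho'(x_k)=\tilde\rho(x_k)\circ g_k$, with each $g_k\in\textrm{Homeo}_+(\mathbb{R})$ supported to the right of the realizations of a prescribed finite set of elements of $FQ(X)$ and trivial elsewhere. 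Freeness of $F(X)$ makes $\tilde\rho'$ a homomorphism, so $\rho'$ is a well-defined point of $\textrm{Hom}(FQ(X),\textrm{Conj}(\textrm{Homeo}_+(\mathbb{R})))$; choosing the supports far enough right it agrees with $\rho$ on the prescribed finite set, hence is as close to $\rho$ as required; and one chooses the $g_k$ so that the perturbed action near $+\infty$ acquires a dynamical feature --- a common fixed point, or a collapsed ping-pong --- absent from $\rho$. Finally, a degree-one monotone $h\colon\mathbb{R}\to\mathbb{R}$ fixing $x_0$ is proper, so it cannot push that feature off to infinity, and therefore no such $h$ can satisfy $\rho(q)\circ h=h\circ\rho'(q)$ for all $q$; thus $\rho$ is not strongly rigid, so $<$ is not isolated.

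I expect the main obstacle to be the bookkeeping that links the two hypotheses to a single usable configuration: ``not E1'' produces an unbounded column for some $\hat q_1$, ``not E2'' a two-coloured column end for some $\hat q_2$, but in $FQ(X)$ these columns genuinely sit in different subsets of their colour classes --- a column is not a full $\langle x_i\rangle$-coset, so distinct reference elements need not be inner-translates of one another as marked ordered sets. Reconciling this will require either a lemma that a non-monochromatic end can always be taken on an unbounded column, or running the perturbation relative to two reference points at once and checking compatibility; this, rather than the perturbation itself, is the technical heart. Two secondary points need care: a column may be unbounded at one end only, or bounded without attaining extrema, so the ``colour near an end'' must be phrased through convex $F(X)$-invariant subsets and their colour content rather than through literal maxima; and one must track which failure is invoked so that the complement of the orders reachable by the construction is exactly the union of the families E1 and E2. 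Checking that the perturbed action really realizes a different order --- not merely a different conjugacy class of action --- is precisely what the strong-rigidity formulation is designed to handle, and in practice I would extract a concrete witness, e.g. a commutator of $q$ and $q'$ whose basepoint-orbit is bounded for $\rho'$ but unbounded for $\rho$.
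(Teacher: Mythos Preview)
Your overall strategy---perturb the realization far out in $\mathbb{R}$ using the freeness of $F(X)$---matches the paper's, but the paper's execution is more direct and your chief anticipated obstacle turns out to be empty.

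The paper does not route through Corollary~\ref{cor:main2} or Theorem~\ref{thm:main2.5} at all. Given a basic neighbourhood $U_S$ of $<$ with $S\subset B_X^m$, it takes the dynamical realization $\rho$ based at $x_0=\iota(\hat q)$, locates reals
\[
x_0^{\rho(\mu_m^+)}<a_0<x_0^{\rho(\mu_{M_1}^+)}<x_0^{\rho(\mu_{M_2}^+)}<b_0
\]
with $\mu_{M_1}^+$ and $\mu_{M_2}^+$ of different colours, picks a bump $\varphi\in\textrm{Homeo}_+(\mathbb{R})$ supported in $[a_0,b_0]$ that pushes $x_0^{\rho(\mu_{M_1}^+)}$ past $x_0^{\rho(\mu_{M_2}^+)}$, and sets $\rho_\varphi(q)=\rho(q)\circ\varphi$ on the colour of $\mu_{M_1}^+$ and $\rho_\varphi(q)=\rho(q)$ otherwise (a homomorphism by freeness). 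One then simply \emph{reads off} a new right order $<'$ from the $x_0$-orbit of $\rho_\varphi$ via the index $\lambda$: since $\varphi$ is the identity to the left of $a_0$, the $x_0$-orbits of $\rho$ and $\rho_\varphi$ agree on $B_X^m$, so $<'\in U_S$; and the single swapped pair $\mu_{M_1}^+,\mu_{M_2}^+$ witnesses $<'\neq<$. No semi-conjugacy obstruction is ever invoked. Your route via strong rigidity would work in principle, but it obliges you to manufacture and then certify a conjugacy-invariant dynamical feature distinguishing $\rho'$ from $\rho$, which is strictly more effort than exhibiting one swapped pair.

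Your anticipated ``main obstacle'' dissolves. If E2 fails at some $\hat q$---say on the upper end---then for every $M$ there are column elements of two distinct colours strictly above $\hat q*M$; iterating this, the column at $\hat q$ is unbounded above, so E1 already fails at that same $\hat q$. Hence one may simply take $\hat q$ to be any witness to the failure of E2, and both hypotheses are available there without reconciling two base points. This is why the paper's proof works with a single $\hat q$ throughout.
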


This rises the following question.

\begin{question}
    Are there no isolated right order on any free quandle?
\end{question}

\section*{Acknowledgements}
The author is supported by JST SPRING, Japan Grant Number JPMJSP2106.


\section{Backgrounds}\label{sec:backgrounds}

\subsection{Definition of quandles}

\begin{definition}
    A {\it quandle} is a non-empty set $Q$ with a binary operation $*$ satisfying the following axioms:
    \begin{enumerate}[label=\textrm{Q}\arabic*]
    \item $q*q=q$ for all $q\in Q$.
    \item For each $q,r\in Q$, there exists a unique $s\in Q$ such that $q=s*r$.
    \item $(q*r)*s=(q*s)*(r*s)$ for all $q,r,s\in Q$.
    \end{enumerate}
    Consider the right multiplication $S_q:Q\to Q$ defined by $r\mapsto r*q$. Then axioms Q$2$ and Q$3$ state that $S_q$ is an automorphism of the quandle $Q$, and axiom Q$1$ states that $q$ is a fixed point of $S_q$. This gives a dual binary operation $*^{-1}$ on $Q$ defined by $q*^{-1}r=s$ if $q=s*r$. Thus, axiom Q$2$ is equivalent to
    \[
        (q*r)*^{-1}r=q=(q*^{-1}r)*r
    \]
    for all $q,r\in Q$, and hence it allows us cancellation from right. Axioms Q$1$ and Q$3$ are referred to as the idempotency and distributivity axioms, respectively. Idempotency and cancellation together give $q*^{-1}q=q$ for all $q\in Q$.
\end{definition}

In what follows, we write $(q*r)*s$ as $q*r*s$ for simplicity.

\begin{example}
    Here are some examples of quandles.
    \begin{itemize}
        \item Every set with the binary operation $q*r=q$ is a quandle called the {\it trivial quandle}.
        \item Let $G$ be a group. For $n\in \mathbb{Z}$, the binary operation $q*r=r^nqr^{-n}$ turns $G$ into the quandle $\textrm{Conj}_n(G)$ called the {\it $n$-conjugation quandle} of $G$. For $n = 1$, this is simply denoted by $\textrm{Conj(G)}$.
        \item Let $G$ be a group. For $\varphi\in \textrm{Aut}(G)$, the binary operation $q*r=\varphi(qr^{-1})r$ forms the quandle $\textrm{Alex}(G,\varphi)$ referred to as the {\it generalized Alexander quandle} of $G$ with respect to $\varphi$. In particular, if let $G$ be the additive group $\mathbb{R}$ and $\varphi$ a $t$-multiplication by some $t\in \mathbb{R}\setminus\{0\}$, then the binary operation is given by $q*r=tq+(1-t)r$  called simply the {\it Alexander quandle} $\textrm{Alex}(\mathbb{R},t)$.
        \item Let $G$ be a group. The binary operation $q*r=rq^{-1}r$ turns $G$ into the quandle $\textrm{Core}(G)$ called the {\it core quandle} of $G$.
        \item Every link can be assigned a quandle called the {\it link quandle}, which is a complete invariant of non-split links up to weak equivalence. Other topological quandles can be seen here \cite{MR3729413}.
        \item Let $X$ be a set and $F(X)$ the free group on $X$. Define
        \[
            S\times F(X)=\{{}^wq\mid q\in X,w\in F(X)\}
        \]
        and for $q,r\in X$ and $v,w\in F(X)$ the binary operation 
        \[
            {}^wq*{}^vr= {}^{wqw^{-1}v}r.
        \]
        We define the {\it free quandle} $FQ(X)$ on $X$ as the quotient of $S\times F(X)$ by the equivalence relation generated by
        \[
            {}^wq={}^{wq}q.
        \]
        The quandle $FQ(X)$ satisfies the universal freeness property similar to that of free groups, and is a subquandle of $\textrm{Conj(F(X))}$(see \cite{MR4669143}).
        \item Let $G$ be a group, $A$ a non-empty subset of $G$ and $\mathcal{D}({}^GA)$ the set of all conjugates of elements of $A$ in $G$. The binary operation $q*r=rqr^{-1}$ turns $\mathcal{D}({}^GA)$ into the quandle $\mathcal{D}({}^GA)$ called the {\it Dehn quandle} of $G$ with respect to $A$. It is known by Dhanwani, Raundal and Singh \cite{MR4669143} that all of core quandles, generalized Alexander quandles and subquandles of any $n$-conjugation quandles (including any free quandles) are Dehn quandles.
    \end{itemize}
\end{example}

\begin{definition}
    We see several notions regarding quandles.
    \begin{itemize}
        \item A quandle $Q$ is said to be {\it latin} at $\hat{q}$ if the map $L_{\hat{q}}:Q\to Q$ defined by $L_{\hat{q}}(r)=\hat{q}*r$ is a bijection for any $r\in Q$. A quandle is said to be {\it semi-latin} if $L_{\hat{q}}$ is injective.
        \item A quandle is {\it involutory} if $q*r=q*^{-1}r$ for all $q,r\in Q$(Involutory quandles were called {\it kei} and studied by Takasaki \cite{Takasaki1943AbstractionOS} at first).
        \item A {\it subquandle} of a quandle $Q$ is a subset of $Q$ which is closed under the quandle operation.
        \item A {\it homomorphism of quandles} $P$ and $Q$ is a map $\varphi:P\to Q$ with $\varphi(q*r)=\varphi(q)*\varphi(r)$ for all $q,r\in P$. By cancellation in $P$ and $Q$, this also implies $\varphi(q*^{-1}r)=\varphi(q)*^{-1}\varphi(r)$ for all $q,r\in P$.
        \item A {\it left action of a quandle} $Q$ on a set $S$ is defined as a quandle homomorphism $\psi:Q\to \textrm{Conj}(\Sigma_S)$, where $\Sigma_S$ is the symmetric group of $S$.
        We call the set $\textrm{Stab}_{\psi}(s)=\{q\in Q \mid \psi(q)(s)=s\}$ the {\it stabilizer} of $s\in S$ under $\psi$.
    \end{itemize}
\end{definition}

\begin{proposition}
    Let $Q$ and $P$ be quandles and $\varphi:P\to Q$ a homomorphism. Then, $\textrm{Conj}(\cdot)$ is a functor from the category of quandles to that of groups.
\end{proposition}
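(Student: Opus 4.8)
The plan is to exhibit the object map and the morphism map of $\textrm{Conj}(\cdot)$ and then verify the two functor axioms, isolating well-definedness of the morphism map as the only step that uses the quandle structure. On objects, I would send a quandle $Q$ to the group presented by
\[
\textrm{Conj}(Q)=\big\langle\, e_q\ (q\in Q)\ \big|\ e_{q*r}=e_r^{-1}e_qe_r\ (q,r\in Q)\,\big\rangle,
\]
that is, the group with one generator per element of $Q$ and the conjugation relations dictated by the operation $*$, following the paper's convention $q*r=r^{-1}qr$ for the conjugation quandle. This is exactly the group receiving the universal quandle map $q\mapsto e_q$ into its own conjugation quandle, since $e_{q*r}=e_r^{-1}e_qe_r$ is the requirement that $q\mapsto e_q$ be a quandle homomorphism $Q\to\textrm{Conj}(\textrm{Conj}(Q))$. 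On morphisms, a homomorphism $\varphi\colon P\to Q$ is sent to the map $\textrm{Conj}(\varphi)$ determined on generators by $e_p\mapsto e_{\varphi(p)}$.

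First I would check that $\textrm{Conj}(\varphi)$ is well defined as a group homomorphism. By the universal property of a presentation it suffices to verify that each defining relator of $\textrm{Conj}(P)$ is carried to a relation valid in $\textrm{Conj}(Q)$. Under $e_p\mapsto e_{\varphi(p)}$ the relator $e_{p*p'}=e_{p'}^{-1}e_pe_{p'}$ becomes $e_{\varphi(p*p')}=e_{\varphi(p')}^{-1}e_{\varphi(p)}e_{\varphi(p')}$; since $\varphi$ is a quandle homomorphism, $\varphi(p*p')=\varphi(p)*\varphi(p')$, and the defining relation of $\textrm{Conj}(Q)$ applied to $\varphi(p),\varphi(p')$ yields exactly $e_{\varphi(p)*\varphi(p')}=e_{\varphi(p')}^{-1}e_{\varphi(p)}e_{\varphi(p')}$. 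Hence the relators are respected and $\textrm{Conj}(\varphi)$ is a genuine group homomorphism. The two functor axioms are then immediate because everything is pinned down on generators: $\textrm{Conj}(\mathrm{id}_Q)$ fixes every $e_q$ and so is the identity, while $\textrm{Conj}(\varphi'\circ\varphi)$ and $\textrm{Conj}(\varphi')\circ\textrm{Conj}(\varphi)$ both send $e_p$ to $e_{\varphi'(\varphi(p))}$ and therefore coincide.

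The main obstacle is the parenthetical anti-homomorphism case. For an anti-homomorphism $\psi\colon P\to Q$ one has $\psi(p*p')=\psi(p')*\psi(p)$, so the relator $e_{p*p'}=e_{p'}^{-1}e_pe_{p'}$ of $\textrm{Conj}(P)$ is pushed to $e_{\psi(p*p')}=e_{\psi(p)}^{-1}e_{\psi(p')}e_{\psi(p)}$, in which the roles of the conjugated and the conjugating generator are interchanged relative to the covariant case. I would therefore not expect the naive assignment $e_p\mapsto e_{\psi(p)}$ to be a homomorphism; instead I would search for the correct induced arrow among the candidates $e_p\mapsto e_{\psi(p)}^{\pm1}$ extended either multiplicatively or anti-multiplicatively, equivalently a homomorphism into the opposite group $\textrm{Conj}(Q)^{\mathrm{op}}$, and determine which choice carries the swapped conjugation relators of $\textrm{Conj}(P)$ to valid relations of $\textrm{Conj}(Q)$. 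Resolving this sign-and-variance bookkeeping, and thereby deciding whether $\textrm{Conj}(\cdot)$ is covariant or contravariant on anti-homomorphisms, is the delicate point; once the correct induced map is fixed, the identity and composition axioms again follow formally since the map is determined on generators.
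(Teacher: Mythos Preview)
The paper does not supply a proof of this proposition; it is stated and the text moves immediately to the next subsection, so there is no original argument to compare against.

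More importantly, there is a mismatch of interpretation. Throughout the paper, $\textrm{Conj}(G)$ is defined for a \emph{group} $G$ as the set $G$ with operation $q*r=r^{-1}qr$, producing a quandle; this is how it is used in every other occurrence (e.g.\ $\textrm{Conj}(\textrm{Homeo}_+(S^1))$, $\textrm{Conj}(\Sigma_S)$, and the commutative square in the subsection on conjugacy-invariant orders, where the vertical arrows labelled $\textrm{Conj}$ send group homomorphisms to quandle homomorphisms). So the paper's $\textrm{Conj}$ is a functor from groups to quandles, and the phrase ``from the category of quandles to that of groups'' in the proposition appears to be a slip of the pen. You have instead constructed the enveloping (or associated) group of a quandle---the left adjoint to the paper's $\textrm{Conj}$---which is a perfectly good functor in the opposite direction, but it is not the construction the paper defines or uses anywhere else. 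Your verification of functoriality for the enveloping-group construction is correct in the homomorphism case; the anti-homomorphism difficulty you flag is real for that construction, but moot here given the mismatch.

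Reading the proposition as the paper evidently intends (functoriality of $G\mapsto\textrm{Conj}(G)$ from groups to quandles), the proof collapses to a one-line check: any group homomorphism $f\colon G\to H$ satisfies $f(r^{-1}qr)=f(r)^{-1}f(q)f(r)$, hence is automatically a quandle homomorphism $\textrm{Conj}(G)\to\textrm{Conj}(H)$; identities and composites are preserved because the underlying set-map is unchanged. The anti-homomorphism clause then just records that a group anti-homomorphism likewise respects conjugation up to the obvious swap, which is presumably why the paper did not feel the need to write out a proof.
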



\subsection{Definition of orders}

\begin{definition}
    Let $Q$ be a quandle. A {\it right circular order} is a map $c: Q^3\to \{-1,0,1\}$ satisfying the following properties.
    \begin{enumerate}[label=\textbf{(\roman*)}]
    \item $c(q_1,q_2,q_3)=0$ if and only if $i=j$ for some $i\neq j$.
    \item $c(q_2,q_3,a_4)-c(q_1,q_3,a_4)+c(q_1,q_2,a_4)-c(q_1,q_2,a_3)=0$ for any $q_1,q_2,q_3,q_4\in Q$.
    \item $c(q_1,q_2,a_3)=c(q_1*q,q_2*q,q_3*q)$ for any $q,q_1,q_2,q_3\in Q$.
    \end{enumerate}
    Left circular orders are defined similarly by replacing the condition (iii) with the following:
    \[
        \textrm{(iii)'\,}c(q_1,q_2,a_3)=c(q*q_1,q*q_2,q*q_3) \textrm{\,for\,any\,} q,q_1,q_2,q_3\in Q.
    \]
\end{definition}
A right and left circular order is called {\it bi-circular order}. If $Q$ has right (resp. left or bi-) circular orders, then $Q$ is said to be {\it right (resp. left or bi-) circular orderable}. 
\begin{definition}
    Let $Q$ be a quandle. A {\it right (resp. left) total order $<$} is a total order which satisfies that
    \begin{align}
        &q_1<q_2 \textrm{\,\,implies\,\,}q_1*q<q_2*q \textrm{\, for\, any\,}q,q_1q_2\in Q\\
        &(\textrm{resp.\,} q_1<q_2 \textrm{\,implies\,}q*q_1<q*q_2 \textrm{\, for\, any\,}q,q_1q_2\in Q).
    \end{align}
    The right and left total order is called {\it bi-order}. If the quandle $Q$ has a right (resp. left or bi-) total order, then $Q$ is said to be {\it right (resp. left or bi-) total orderable}. 
\end{definition}

Since every set is a trivial quandle, the natural total order on $\mathbb{R}$ is a right total order denoted by $<_{R}$. Similarly the natural circular order on $S^1$ is a right circular order denoted by $\textrm{ord}:(S^1)^3\to \{-1,0,1\}$.

\begin{lemma}
    A right (resp. left) total orderable quandle is also right (resp. left) circular orderable.
\end{lemma}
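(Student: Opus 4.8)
The plan is to adapt the classical construction that produces a circular order from a linear order. Given a right total order $<$ on $Q$, define $c\colon Q^3\to\{-1,0,1\}$ by setting $c(q_1,q_2,q_3)=0$ whenever two of the $q_i$ coincide, and otherwise $c(q_1,q_2,q_3)=1$ if $(q_1,q_2,q_3)$ is a cyclic rotation of the increasing triple (that is, if $q_1<q_2<q_3$, or $q_2<q_3<q_1$, or $q_3<q_1<q_2$) and $c(q_1,q_2,q_3)=-1$ otherwise; equivalently $c(q_1,q_2,q_3)$ is the sign of the permutation putting $q_1,q_2,q_3$ into increasing order. I would then check the three axioms of a right circular order for this $c$.

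Axiom (i) is immediate from the definition. For axiom (iii), I would note that by Q$2$ and Q$3$ the right multiplication $S_q\colon r\mapsto r*q$ is a bijection of $Q$, while the defining property of a right total order gives $r_1<r_2\iff r_1*q<r_2*q$; hence $S_q$ is an automorphism of the linearly ordered set $(Q,<)$, and any such automorphism preserves the cyclic-sign function, so $c(q_1,q_2,q_3)=c(q_1*q,q_2*q,q_3*q)$. In the left case one uses instead that $L_q\colon r\mapsto q*r$ is order-preserving, hence injective, on $Q$; an injective order-preserving map likewise preserves $c$ on any triple of distinct points, which yields axiom (iii)$'$.

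The one substantive point is the cocycle identity (ii). Here I would observe that, for fixed $q_1,q_2,q_3,q_4$, every term of (ii) depends only on the restriction of $<$ to the finite set $\{q_1,q_2,q_3,q_4\}$, and that any linear order on a set of at most four elements is order-isomorphic to a subset of $(\mathbb{R},<_R)$. Since $c$ transports correctly along order isomorphisms (by the same argument as for (iii)), it suffices to verify (ii) for four real numbers; this is precisely the statement that $\textrm{ord}$ satisfies axiom (ii) on $S^1$, which is classical, and it can alternatively be dispatched by a short case analysis over the orderings of $\{1,2,3,4\}$, reduced using the symmetries of the expression.

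I expect the cocycle identity (ii) to be the only real obstacle, and a mild one: all of the quandle-specific content sits in axiom (iii) and is formal, whereas (ii) is a purely combinatorial fact about linear orders that is either offloaded to the real line or settled by a finite check.
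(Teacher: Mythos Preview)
Your construction is exactly the paper's: define $c_<(q_1,q_2,q_3)$ to be the sign of the permutation putting the triple in increasing order (and $0$ on degenerate triples). The paper then simply asserts ``by definition, $c_<$ is a right circular order,'' whereas you actually verify axioms (i)--(iii); your treatment is more detailed but the approach is identical.
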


\begin{proof}
    Let $<$ be a right total order on a quandle $Q$. Define the map $c_<:Q^3\to\{-1,0,1\}$ by
    \begin{align}
        c_<(q_1,q_2,q_3)=
        \begin{cases*}
            \textrm{sgn}(\sigma) & if $q_{\sigma(1)}<q_{\sigma(2)}<q_{\sigma(3)}$, \\
            0 & otherwise.
        \end{cases*}
    \end{align}
    By definition, $c_<$ is a right circular order.
\end{proof}

We recall several facts, focusing on right orderable and the property of latin or semi-latin.

\begin{proposition}\label{prop:facts}
    \begin{enumerate}[label=\textbf{(\roman*)}]
    \item Every trivial quandle is right total orderable but not semi-latin at any point.
    \item Every non-trivial finite quandle is not right circular orderable.
    \item If $G$ is a bi-total orderable group, then $\textrm{Conj(G)}$ is right total orderable.
    \item If $G$ is a group, then $\textrm{Core(G)}$ is a kei. Every non-trivial kei is not right circular orderable.
    \item If $t\in \mathbb{R}\setminus\{0\}$ is a positive number, then $\textrm{Alex}(\mathbb{R},t)$ is right total orderable. If $u\in \mathbb{R}\setminus\{0\}$ does not equal to one, then $\textrm{Alex}(\mathbb{R},u)$ is latin at all points.
    \item If $G$ is a right total orderable group and $\varphi\in \textrm{Aut}(G)$ is order-preserving, then $\textrm{Alex}(G,\varphi)$ is right total orderable. If $H$ is a group and $\psi\in \textrm{Aut}(H)$ is fixed point free, then $\textrm{Alex}(H,\psi)$ is semi-latin at all points. 
    \item Every free quandle is right total orderable and semi-latin at all points.
    \item If $Q$ is a quandle and $c$ is a right circular order on $Q$, then for each $q,q_1,q_2,q_3\in Q$, we have:
    \[
        c(q_1,q_2,q_3)=c(q_1*^{-1}q,q_2*^{-1}q,q_3*^{-1}q).
    \]
    If $<$ is a right total order on $Q$, then for each $q,q_1,q_2\in Q$, we have:
    \[
        q_1<q_2\,\textrm{\,implies\,}\,q_1*^{-1}q<q_2*^{-1}q.
    \]
    \end{enumerate}
\end{proposition}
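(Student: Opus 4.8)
The plan is to verify each of the eight items more or less independently, grouping them by the two themes (right orderability and the latin / semi-latin property). For \textbf{(i)}, on a trivial quandle $q*r=q$, so any total order is vacuously a right total order; and $L_{\hat q}(r)=\hat q$ is constant, hence never injective once $|Q|\ge 2$, so $Q$ is not semi-latin at any point. For \textbf{(ii)}, I would argue that a right circular order gives, after fixing a basepoint, a right total order on $Q\setminus\{q_0\}$ in the usual way; in a finite non-trivial quandle some right multiplication $S_q$ is a non-identity permutation, hence has a cycle, and iterating a strict inequality around that cycle yields $q_1<q_1$, a contradiction — so no right total order exists on a finite quandle of size $\ge 2$, and likewise no right circular order. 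For \textbf{(iii)}, if $G$ is bi-orderable with order $\prec$, then $\textrm{Conj}(G)=\textrm{Conj}_1(G)$ has operation $q*r=r^{-1}qr$; I would check that $\prec$ itself, read on the underlying set $G$, is a right total order for the quandle, using that conjugation by a fixed element is an order automorphism of a bi-ordered group. For \textbf{(iv)}, $\textrm{Core}(G)$ has $q*r=rq^{-1}r$, and a direct computation gives $(q*r)*r = r(rq^{-1}r)^{-1}r = q$, so every $S_r$ is an involution; then the cycle argument from (ii) applies to the $2$-cycles of any non-identity $S_r$, ruling out right circular orderability in the non-trivial case.

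For the Alexander-type items \textbf{(v)} and \textbf{(vi)}: in $\textrm{Alex}(\mathbb{R},t)$ with $t>0$ the operation $q*r=tq+(1-t)r$ is strictly increasing in $q$, so $<_R$ is a right total order; and $L_{\hat q}(r)=t\hat q+(1-t)r$ is affine in $r$ with slope $1-t$, a bijection iff $t\ne 1$. For the general $\textrm{Alex}(G,\varphi)$ with $q*r=\varphi(qr^{-1})r$: if $G$ is right-orderable and $\varphi$ is order-preserving, I would show the given right order on $G$ works for the quandle by expanding the inequality $q_1*q<q_2*q$ and using that $x\mapsto \varphi(xq^{-1})$ is order-preserving composed with right translation; for the semi-latin claim, $L_{\hat q}(r)=\varphi(\hat q r^{-1})r$, and if $L_{\hat q}(r)=L_{\hat q}(r')$ one manipulates to $\varphi(w)=w$ for $w=\hat q r^{-1}(\hat q r'^{-1})^{-1}$-type element, contradicting fixed-point-freeness of $\psi$ unless $r=r'$. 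Item \textbf{(vii)} then follows: a free quandle $FQ(X)$ is a subquandle of $\textrm{Conj}(F(X))$, $F(X)$ is bi-orderable, so by (iii) and restriction $FQ(X)$ is right total orderable; semi-latin-ness at all points is checked from the normal form $q^w$ and the operation $q^w*r^v=q^{wv^{-1}rv}$, noting $L_{\hat q}$ is injective because distinct generators and distinct reduced conjugating words give distinct images. Finally \textbf{(viii)} is obtained from axioms (iii) / the defining implication for $<$ by substituting $q_i\mapsto q_i*^{-1}q$ and using cancellation $(q_i*^{-1}q)*q=q_i$, so right-invariance transfers verbatim to $*^{-1}$.

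I expect the main obstacle to be item \textbf{(ii)} (and the closely related \textbf{(iv)}): one must be careful that a right circular order on a finite quandle genuinely forces a contradiction, since the naive "cut at a point to get a linear order" step needs the non-triviality to locate a non-identity right multiplication, and one then has to run the monotonicity around a genuine cycle of that permutation rather than around the whole quandle. The other delicate point is bookkeeping the word-manipulations in \textbf{(vi)} and \textbf{(vii)} so that "order-preserving $\varphi$" and "fixed-point-free $\psi$" are applied to the correct group element; these are routine but must be set up so no hidden use of commutativity or two-sided invariance creeps in. Everything else — (i), (iii), (v), (viii) — is a short direct verification from the definitions already recalled in the excerpt.
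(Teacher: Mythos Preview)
The paper itself does not prove this proposition; it defers entirely to \cite{MR4450681,MR4330281,ba2022circularorderabilityquandles,ba2023knot}. Your direct verification is therefore already more than the paper offers, and items \textbf{(i)}, \textbf{(iii)}, \textbf{(v)}, \textbf{(vi)}, \textbf{(viii)} are correct as sketched; \textbf{(vii)} is fine for orderability, though the semi-latin claim deserves one honest line (e.g.\ abelianize in $F(X)$, or use that the centralizer in a free group of a conjugate of a generator is the cyclic group generated by that conjugate, so $\hat q*r=\hat q*s$ forces $rs^{-1}\in\langle\hat q\rangle$ and then $r=s$).

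The genuine gap is exactly where you flag it, in \textbf{(ii)} and, by inheritance, \textbf{(iv)}. Cutting a right circular order $c$ at a basepoint $q_0$ does \emph{not} produce a right-invariant linear order: right-invariance of $c$ gives $c(q_0,a,b)=c(q_0*q,\,a*q,\,b*q)$, not $c(q_0,\,a*q,\,b*q)$, so your ``iterate a strict inequality around a cycle of $S_q$'' argument never gets off the ground. The repair uses the axiom you have not yet touched, Q1. For \textbf{(iv)}: a non-identity involution $S_r$ swaps some $a\neq b$ and fixes $r$, with $r\notin\{a,b\}$ (else Q2 forces $a=b$); then
\[
c(a,b,r)=c(a*r,\,b*r,\,r*r)=c(b,a,r)=-c(a,b,r),
\]
so $c(a,b,r)=0$ on a non-degenerate triple, a contradiction. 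For \textbf{(ii)}: on a finite set of size $\ge 3$, a bijection preserving the cyclic order must preserve successors and is therefore a rotation, and a rotation with a fixed point is the identity; since each $S_q$ preserves $c$ and fixes $q$, every $S_q=\mathrm{id}$ and $Q$ is trivial after all.
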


\begin{proof}
    The proofs can be found in \cite{MR4450681,MR4330281,ba2022circularorderabilityquandles,ba2023knot}.
\end{proof}


\begin{definition}
    Let $Q$ be a quandle. The set of {\it right circular orders} on $Q$ is denoted by $RCQ(Q)$. Similarly, the set of {\it right total orders} on $Q$ is denoted by $RQ(Q)$.
\end{definition}

We topologize the set $RCQ(Q)$ as a subspace of $\{-1,0,1\}^{Q^3}$, the space
of all maps from $Q^3$ to $\{-1,0,1\}$ with the Tychonoff topology. We define
\begin{align}
    &\Gamma(Q)=\{s=(x,y,z)\in Q^3\mid x=y \textrm{\,or\,}y=z\textrm{\,or\,}z=x\}\,\textrm{and}\\
    &U_s=\{c\in RCQ(Q)\mid c(s)=1\}
\end{align}
for some $s\in Q^3\setminus\Gamma(Q)$.

\begin{lemma}[Ba and Elhamdadi, Lemma 4.1 \cite{ba2022circularorderabilityquandles}]
    The set $\{U_s\}_{s\in Q^3\setminus\Gamma(Q)}$ is a subbasis for the topology
on $RCQ(Q)$.
\end{lemma}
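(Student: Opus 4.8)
The plan is to compare the topology generated by $\{U_s\}_{s\in Q^3\setminus\Gamma(Q)}$ with the subspace topology that $RCQ(Q)$ inherits from the Tychonoff topology on $\{-1,0,1\}^{Q^3}$. Recall that this subspace topology has as a subbasis the sets $V_{s,\epsilon}=\{c\in RCQ(Q)\mid c(s)=\epsilon\}$ for $s\in Q^3$ and $\epsilon\in\{-1,0,1\}$. Since each $U_s$ equals $V_{s,1}$ and is in particular open in $RCQ(Q)$, the topology generated by $\{U_s\}$ is already contained in the subspace topology, so it suffices to prove the reverse inclusion; for this I will show that every $V_{s,\epsilon}$ is either $\varnothing$, all of $RCQ(Q)$, or a single $U_{s'}$ with $s'\in Q^3\setminus\Gamma(Q)$.

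The key algebraic input is that every right circular order $c$ is equivariant under permutations of its arguments up to sign, i.e.\ $c(q_{\sigma(1)},q_{\sigma(2)},q_{\sigma(3)})=\textrm{sgn}(\sigma)\,c(q_1,q_2,q_3)$ for all $\sigma$. This is extracted from axioms (i) and (ii) alone: setting $q_4=q_1$ in the cocycle identity (ii) and using $c(q_1,q_3,q_1)=c(q_1,q_2,q_1)=0$ gives the cyclic invariance $c(q_2,q_3,q_1)=c(q_1,q_2,q_3)$, while setting $q_4=q_2$ and using $c(q_2,q_3,q_2)=c(q_1,q_2,q_2)=0$ gives the transposition rule $c(q_1,q_3,q_2)=-c(q_1,q_2,q_3)$; a $3$-cycle and a transposition generate the symmetric group on the three slots.

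Now fix $s=(x,y,z)\in Q^3$. If $s\in\Gamma(Q)$, then axiom (i) gives $c(s)=0$ for all $c\in RCQ(Q)$, so $V_{s,0}=RCQ(Q)$ and $V_{s,1}=V_{s,-1}=\varnothing$. If $s\notin\Gamma(Q)$, then axiom (i) forces $c(s)\in\{-1,1\}$ for all $c$, so $V_{s,0}=\varnothing$, $V_{s,1}=U_s$, and, putting $s'=(y,x,z)$ (which again lies in $Q^3\setminus\Gamma(Q)$, since swapping coordinates preserves the defining condition of $\Gamma(Q)$), the transposition rule gives $c(s)=-1\iff c(s')=1$, hence $V_{s,-1}=U_{s'}$. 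Thus in every case $V_{s,\epsilon}\in\{\varnothing,\,RCQ(Q)\}\cup\{U_{s'}\mid s'\in Q^3\setminus\Gamma(Q)\}$, and because $\varnothing$ and $RCQ(Q)$ belong to every topology (as the empty union and, by the usual convention, the empty intersection of subbasic sets), the two topologies coincide. I expect the only mild friction to be the bookkeeping with degenerate triples and the subbasis convention; the mathematical content is the $\pm1$ permutation equivariance, which is a short formal computation.
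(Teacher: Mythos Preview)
Your argument is correct: deriving the sign-equivariance of $c$ under permutations from axioms (i) and (ii), and then observing that each cylinder set $V_{s,\epsilon}$ of the Tychonoff subbasis is either $\varnothing$, all of $RCQ(Q)$, or some $U_{s'}$, is exactly the right computation. Note, however, that the paper under review does not itself prove this lemma---it is quoted verbatim from Ba and Elhamdadi's paper with attribution and no argument is reproduced---so there is no in-paper proof to compare against. Your proof is the standard one and would be the expected content of the cited reference.
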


\begin{thm}[Ba and Elhamdadi, Theorem 4.1 \cite{ba2022circularorderabilityquandles}]\label{thm:cpt}
    The space $RCQ(Q)$ is compact.
\end{thm}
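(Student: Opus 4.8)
The plan is to exhibit $RCQ(Q)$ as a closed subspace of $\{-1,0,1\}^{Q^3}$, which is compact by Tychonoff's theorem since $\{-1,0,1\}$ is a finite (hence compact) discrete space. A closed subspace of a compact space is compact, so this suffices.

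First I would note that $c\in\{-1,0,1\}^{Q^3}$ belongs to $RCQ(Q)$ exactly when it satisfies the three defining conditions (i), (ii), (iii) of a right circular order, so that $RCQ(Q)=C_{\mathrm{i}}\cap C_{\mathrm{ii}}\cap C_{\mathrm{iii}}$, where $C_{\mathrm{i}},C_{\mathrm{ii}},C_{\mathrm{iii}}$ are the subsets of $\{-1,0,1\}^{Q^3}$ consisting of the maps satisfying the corresponding condition. It then remains only to prove that each of these three sets is closed.

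Next I would verify closedness using the key observation that each of the conditions (i)--(iii), once a finite tuple of elements of $Q$ has been fixed, constrains only finitely many coordinates of the product, and any constraint on finitely many coordinates cuts out a clopen subset because $\{-1,0,1\}$ carries the discrete topology. Explicitly, $C_{\mathrm{i}}=\bigcap_{s\in\Gamma(Q)}\{c: c(s)=0\}\cap\bigcap_{s\in Q^3\setminus\Gamma(Q)}\{c: c(s)\neq 0\}$ is an intersection of clopen sets, each depending on a single coordinate; $C_{\mathrm{ii}}$ is the intersection over all $(q_1,q_2,q_3,q_4)\in Q^4$ of the clopen sets $\{c: c(q_2,q_3,q_4)-c(q_1,q_3,q_4)+c(q_1,q_2,q_4)-c(q_1,q_2,q_3)=0\}$, each depending on at most four coordinates; and $C_{\mathrm{iii}}$ is the intersection over all $(q,q_1,q_2,q_3)\in Q^4$ of the clopen sets $\{c: c(q_1,q_2,q_3)=c(q_1*q,q_2*q,q_3*q)\}$, each depending on at most two coordinates. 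Hence $C_{\mathrm{i}},C_{\mathrm{ii}},C_{\mathrm{iii}}$ are all closed, so $RCQ(Q)$ is closed in $\{-1,0,1\}^{Q^3}$ and therefore compact.

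I do not expect any genuine obstacle here: the entire content is the standard ``closed subset of a Tychonoff cube'' argument. The only point that warrants a moment's care is that condition (i) is phrased as a biconditional, but it splits into the two single-coordinate families displayed above (forcing the value $0$ on $\Gamma(Q)$ and forbidding it off $\Gamma(Q)$), each of which is clopen, so condition (i) still cuts out a closed set.
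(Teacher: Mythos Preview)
Your argument is correct and is the standard ``closed subset of a Tychonoff cube'' proof. Note, however, that the paper does not supply its own proof of this statement: it is quoted as Theorem~4.1 of Ba and Elhamdadi \cite{ba2022circularorderabilityquandles}, so there is nothing in the present paper to compare against; your proof is essentially what one would expect the cited reference to contain.
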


We topologize the set $RQ(Q)$ as a subspace of $RCQ(Q)$. There is the following fact about these spaces.

\begin{thm}[Dabkowska, Dabkowski, Harizanov, Przytycki, and Veve, Theorem 2 \cite{MR2320157}]\label{thm:Cantor}
    If $Q$ is a quandle, then $RQ(Q)$ is compact, totally disconnected. If $Q$ is countable, then the space is metrizable.
\end{thm}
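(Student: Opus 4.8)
The plan is to present $RQ(Q)$ as a closed subspace of a power of a finite discrete space and then read off the three assertions from standard permanence properties. First I would identify a right total order $<$ on $Q$ with its sign function $\sigma_<\colon Q\times Q\to\{-1,0,1\}$, given by $\sigma_<(q,r)=1$ if $q<r$, $\sigma_<(q,r)=-1$ if $r<q$, and $\sigma_<(q,r)=0$ if $q=r$. This realizes $RQ(Q)$ as a subset of $X:=\{-1,0,1\}^{Q\times Q}$ equipped with the Tychonoff topology, and this topology coincides with the subspace topology inherited from $RCQ(Q)$ via the assignment $<\mapsto c_<$ of the lemma that total orderability implies circular orderability: indeed $<\mapsto c_<$ is continuous, since the preimage of a subbasic set $U_s$ with $s=(x,y,z)$ consists of the orders satisfying $x<y<z$, $y<z<x$, or $z<x<y$, each of which constrains only finitely many coordinates of $\sigma_<$. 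The space $X$ is compact by Tychonoff's theorem, totally disconnected because it is a product of totally disconnected spaces, and, when $Q$ is countable, metrizable, because then $Q\times Q$ is countable and a countable product of finite discrete spaces is metrizable.

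The second step is to check that $RQ(Q)$ is closed in $X$. Each coordinate evaluation $f\mapsto f(q,r)$ is continuous with target the finite discrete space $\{-1,0,1\}$, so any condition referring to finitely many coordinates cuts out a clopen subset of $X$. Now $f\in X$ is the sign function of a right total order precisely when it satisfies all of the following: $f(q,q)=0$ for each $q$, and $f(q,r)\neq 0$ with $f(q,r)=-f(r,q)$ for each pair $q\neq r$ (trichotomy and antisymmetry); for all $q,r,s$, $f(q,r)=1$ and $f(r,s)=1$ imply $f(q,s)=1$ (transitivity); and for all $q,r,t$, $f(q,r)=1$ implies $f(q*t,r*t)=1$ (right invariance). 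For each fixed choice of the finitely many elements appearing, each clause defines a clopen subset of $X$; note that the clause specific to quandles, right invariance, involves only the two coordinates $(q,r)$ and $(q*t,r*t)$. Hence $RQ(Q)$ is an intersection of clopen subsets of $X$, so it is closed in $X$.

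Finally I would invoke permanence: a closed subspace of a compact space is compact, a subspace of a totally disconnected space is totally disconnected, and a subspace of a metrizable space is metrizable. Applied to $RQ(Q)\subseteq X$, these give that $RQ(Q)$ is compact and totally disconnected, and metrizable when $Q$ is countable. Staying strictly within the paper's framework one argues equivalently: $RCQ(Q)$ is compact by Theorem~\ref{thm:cpt} and is a subspace of $\{-1,0,1\}^{Q^3}$, hence totally disconnected and, for countable $Q$, metrizable; it then remains only to see that $RQ(Q)$ is closed in $RCQ(Q)$, which follows from the compactness of $RQ(Q)\subseteq X$ and the continuity of $<\mapsto c_<$, since the image of a compact set in the Hausdorff space $RCQ(Q)$ is closed. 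I do not expect a genuine obstacle: the only points requiring a little care are checking that each clause defining ``right total order'' is a finitary, hence closed, condition on the coordinates --- with right invariance being the quandle-specific clause --- and confirming that the topology used on $RQ(Q)$ matches the subspace topology from $RCQ(Q)$.
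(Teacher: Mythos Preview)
The paper does not supply its own proof of this theorem; it is quoted as Theorem~2 of \cite{MR2320157} and taken for granted. Your argument is correct and is essentially the standard one: encode a right order by its sign function, observe that each axiom of a right total order constrains only finitely many coordinates and hence cuts out a clopen set, and conclude that $RQ(Q)$ is closed in the Tychonoff product $\{-1,0,1\}^{Q\times Q}$, whence compactness, total disconnectedness, and (for countable $Q$) metrizability follow by permanence.

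One small caveat on your parenthetical remarks relating this to $RCQ(Q)$: the assignment $<\mapsto c_<$ need not be injective (already on a trivial quandle with at least three elements, distinct linear orders can yield the same cyclic order), so the claim that the $X$-topology ``coincides with the subspace topology inherited from $RCQ(Q)$'' is not literally correct, and the paper's sentence ``We topologize $RQ(Q)$ as a subspace of $RCQ(Q)$'' should not be read as asserting an embedding. This does not touch your main argument, which establishes the three properties directly for the product topology; and since continuous images of compacta are compact while total disconnectedness and metrizability pass to subspaces, the conclusions hold equally for the image of $RQ(Q)$ inside $RCQ(Q)$, so either reading of the paper's convention is covered.
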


One can prove the circular version of the above theorem to use Theorem \ref{thm:cpt}.

\begin{corollary}
    If $Q$ is a quandle, then $RCQ(Q)$ is compact, totally disconnected. If $Q$ is countable, then the space is metrizable.    
\end{corollary}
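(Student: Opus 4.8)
The plan is to deduce all three assertions from the already-established compactness (Theorem \ref{thm:cpt}) together with the concrete description of $RCQ(Q)$ as a subspace of the product space $\{-1,0,1\}^{Q^3}$ with the Tychonoff topology. Since $\{-1,0,1\}$ carries the discrete topology, each factor is a finite, hence zero-dimensional and in particular totally disconnected, Hausdorff space; everything will follow from standard properties of Tychonoff products of finite discrete spaces.

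First I would treat total disconnectedness. An arbitrary product of totally disconnected spaces is totally disconnected, because the connected component of a point in a product is contained in the product of the connected components of its coordinates, and here each coordinate component is a singleton; hence the connected components of $\{-1,0,1\}^{Q^3}$ are singletons. Total disconnectedness passes to subspaces, so $RCQ(Q)$ is totally disconnected. Combined with compactness this also shows $RCQ(Q)$ is zero-dimensional, so the subbasic sets $U_s$ together with complements of such sets form a basis of clopen sets. Compactness itself is exactly Theorem \ref{thm:cpt}, so no new work is needed there; if one prefers a self-contained argument, one notes that $\{-1,0,1\}^{Q^3}$ is compact by Tychonoff's theorem and that $RCQ(Q)$ is closed in it, since each of the defining conditions (i)--(iii) of a right circular order is, for each fixed tuple of arguments, a condition on finitely many coordinates of the product and hence clopen, and $RCQ(Q)$ is the intersection of all these clopen sets.

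Finally, assume $Q$ is countable. Then $Q^3$ is countable, so $\{-1,0,1\}^{Q^3}$ is a countable product of metrizable spaces, hence metrizable: after fixing an enumeration $s_1,s_2,\dots$ of $Q^3$ one may use the explicit metric $d(c,c')=\sum_{n\ge 1}2^{-n}\,|c(s_n)-c'(s_n)|$, or invoke Urysohn's metrization theorem since the product is second countable and regular. Metrizability is inherited by subspaces, so $RCQ(Q)$ is metrizable. I do not expect any genuine obstacle here: every step is routine point-set topology, and the only input specific to circular orders---compactness (equivalently, closedness of $RCQ(Q)$ inside the product)---has already been supplied. The one point that would merit a line of care in a fully written-out version is the verification that conditions (i)--(iii) cut out a closed subset, which reduces precisely to the remark that each is a constraint on finitely many coordinates of a product of discrete spaces.
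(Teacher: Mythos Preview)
Your proposal is correct and follows exactly the approach the paper implicitly intends: the paper does not spell out a proof but simply remarks that one proves the circular version by combining Theorem~\ref{thm:cpt} with the argument behind Theorem~\ref{thm:Cantor}, and your write-up supplies precisely those standard product-space details (closedness in $\{-1,0,1\}^{Q^3}$, total disconnectedness inherited from the product, metrizability of countable products of finite discrete spaces).
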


\begin{corollary}
    Let $Q$ be a countable quandle. If $RCQ(Q)$ has no isolated points, then $RCQ(Q)$ is homeomorphic to the Cantor set. Similarly,  if $RQ(Q)$ has no isolated points, then $RQ(Q)$ is homeomorphic to the Cantor set. 
\end{corollary}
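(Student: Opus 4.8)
The plan is to invoke Brouwer's topological characterization of the Cantor set: a topological space is homeomorphic to the Cantor set if and only if it is nonempty, compact, metrizable, perfect (i.e.\ has no isolated points), and totally disconnected. All of the ingredients have already been assembled in the preceding results, so the proof is essentially a matter of checking off hypotheses.

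First consider the circular case. By the corollary immediately above, $RCQ(Q)$ is compact and totally disconnected, and since $Q$ is countable it is metrizable. The hypothesis that $RCQ(Q)$ has no isolated points is exactly perfectness. Nonemptiness we may assume, since otherwise there is nothing to prove (equivalently, we are in the interesting case where $Q$ is right circular orderable). The one technical point to address is that Brouwer's theorem is most naturally stated with the space being zero-dimensional, i.e.\ having a basis of clopen sets, which is a priori stronger than total disconnectedness; but for a compact Hausdorff space the two notions coincide, so compactness of $RCQ(Q)$ upgrades total disconnectedness to zero-dimensionality. With all of compactness, metrizability, perfectness, zero-dimensionality and nonemptiness in hand, Brouwer's theorem yields $RCQ(Q)\cong$ Cantor set. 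The right-order case is verbatim the same, invoking Theorem \ref{thm:Cantor} in place of the preceding corollary to supply compactness, total disconnectedness and metrizability of $RQ(Q)$.

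There is no genuine obstacle here: the only subtlety worth flagging is the step from total disconnectedness to zero-dimensionality, which rests on compactness, and (if one wants to be careful) the implicit assumption of nonemptiness. Everything else is a direct application of the classical characterization theorem.
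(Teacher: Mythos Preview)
Your proof is correct and is exactly the intended argument: the paper states this as an immediate corollary without proof, relying on the preceding results for compactness, total disconnectedness, and metrizability, together with Brouwer's characterization of the Cantor set. Your remark on upgrading total disconnectedness to zero-dimensionality via compactness is a nice bit of care, though the standard formulation of Brouwer's theorem already uses total disconnectedness directly.
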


\begin{definition}
    The isolated point in $RCQ(Q)$ (resp. $RQ(Q)$) are called a {\it isolated right circular (resp. total) order} on $Q$. 
\end{definition}

For simplicity, we will refer to right circular orders as circular orders and right total orders as right orders in what follows. 


\section{Dynamical Realizations of Quandles}\label{sec:3}

\begin{definition}
    A quandle $Q$ is said to be {\it weak-latin} if for any $q,r\in Q$, there exists an element $\hat{q}\in Q$ such that $\hat{q}*q=\hat{q}*r$ implies $q=r$.
\end{definition}

\begin{proposition}\label{prop:weak-latin}
    \begin{enumerate}[label=\textbf{(\arabic*)}]
    \item If a quandle is latin at some point, then it is semi-latin at that point.
    \item If a quandle is semi-latin at some point, then it is weak-latin. 
    \item Every subquandle of a weak-latin quandle is weak-latin.
    \item For any group $G$, $\textrm{Conj}(G)$ is weak-latin if and only if $G$ is centerless. In particular, if $G$ is centerless, then the Dehn quandle $\mathcal{D}({}^GA)$ is weak-latin  for any non-empty subset $A\subset G$. 
    \item Every non-trivial trivial quandle is not weak-latin.
    \end{enumerate}
\end{proposition}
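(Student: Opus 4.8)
My plan is to treat the five parts in roughly increasing order of difficulty, using throughout the reformulation obtained by unwinding the contrapositive inside the definition: $Q$ is weak-latin if and only if for every pair $q\neq r$ in $Q$ there is some $\hat q$ with $\hat q*q\neq\hat q*r$, which says precisely that the right translations $S_q$ and $S_r$ are distinct whenever $q\neq r$, i.e. the assignment $q\mapsto S_q$ is injective. With this ``faithfulness'' description in hand, parts (1), (2), (5) should be immediate and (4) short, the weight of the statement lying in part (3).

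For (1): a bijective $L_{\hat q}$ is in particular injective, which is exactly the definition of semi-latin at $\hat q$. For (2): if $L_{\hat q}$ is injective, then this single $\hat q$ witnesses the weak-latin condition for every pair, since $\hat q*q=\hat q*r$ is the equation $L_{\hat q}(q)=L_{\hat q}(r)$, forcing $q=r$. For (5): in a trivial quandle every $S_q$ is the identity, so $q\mapsto S_q$ is constant and not injective once $|Q|\geq 2$. For (4): in $\textrm{Conj}(G)$ the translation $S_q$ is conjugation by $q$, so $S_q=S_r$ holds exactly when $qr^{-1}\in Z(G)$; hence $q\mapsto S_q$ is injective if and only if $G$ is centerless, and the ``in particular'' clause then follows by applying part (3) to the subquandle $\mathcal{D}(A^G)\subseteq\textrm{Conj}(G)$, which is a subquandle because it is closed under conjugation.

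The substance is part (3), which I expect to be the main obstacle. Let $P$ be a subquandle of a weak-latin quandle $Q$ and suppose toward a contradiction that distinct $q,r\in P$ satisfy $p*q=p*r$ for all $p\in P$. The idea is to examine $T=\{\,w\in Q\mid w*q=w*r\,\}$. Using the distributivity axiom Q$3$ together with the injectivity of right translations, $T$ is a subquandle of $Q$; it contains $P$ by hypothesis; and specializing the relation to $p=q$ and $p=r$ gives the auxiliary identities $q*r=q$ and $r*q=r$. If one can show $T=Q$, then $S_q=S_r$, contradicting $q\neq r$ via weak-latinness of $Q$. Thus everything comes down to showing that a subquandle of $Q$ containing $P$ and carrying the relations above must exhaust $Q$; equivalently, that the separation available in $Q$ (an element $\hat q_0$ with $\hat q_0*q\neq\hat q_0*r$) can be propagated down into the subquandle $P$ itself, using distributivity and the relations $p*q=p*r$. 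This transport step is where I expect the real difficulty, and it carries the content of the statement; once it is in place, the remaining parts assemble as above.
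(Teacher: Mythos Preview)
Your treatment of (1), (2), (5), and the main biconditional in (4) is correct and matches the paper's approach; the paper in fact only writes out (4), leaving the rest as routine, and its argument for (4) is the same conjugation computation you give.

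However, your suspicion about (3) is well founded: part (3) is \emph{false} as stated, and consequently so is the ``in particular'' clause of (4). Take $G=S_3$, which is centerless, so $\mathrm{Conj}(G)$ is weak-latin by the main assertion of (4). The subset $P=\{(123),(132)\}$ is the conjugacy class of $(123)$, hence is the Dehn quandle $\mathcal{D}(\{(123)\}^{S_3})$ and in particular a subquandle of $\mathrm{Conj}(S_3)$. But $(123)$ and $(132)$ commute, so $p*q=q^{-1}pq=p$ for all $p,q\in P$; thus $P$ is a two-element trivial quandle, which by (5) is not weak-latin. This simultaneously refutes (3) and the ``in particular'' of (4).

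The paper's own proof of (4) does not escape this: it chooses the separating element $g$ from $G$, not from $\mathcal{D}(A^G)$, so it is really only establishing weak-latinness of $\mathrm{Conj}(G)$. For the Dehn quandle one would need such a $g$ inside $\mathcal{D}(A^G)$, and the example above shows none need exist. So the ``transport step'' you flagged in (3) is not merely difficult but impossible in general; the gap is in the statement, not in your argument.
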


\begin{proof}
    We prove (4).  Take $g\in G$ and $q,r\in \mathcal{D}({}^GA)$ satisfying that $g$ does not commute with $r^{-1}q$. Assume $g*q=g*r$. This implies that $(r^{-1}q)g(r^{-1}q)^{-1}=g$, which implies $q=r$.
 
    The converse can be proved similarly.
\end{proof}

\begin{lemma}\label{lem:const}
    Let $Q$ be a countable weak-latin quandle and $c$ a circular order on $Q$. Then, there is a quandle action on the circle as follows:
    \begin{align}
        \rho:Q\hookrightarrow \textrm{Conj}(\textrm{Homeo}_+(S^1)).
    \end{align}
\end{lemma}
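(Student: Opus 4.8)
The plan is to transplant to quandles the dynamical realization of countable left‑orderable groups, using the family of right multiplications $S_q\colon r\mapsto r*q$ in place of the left translations of a group. Fix an enumeration $Q=\{q_0,q_1,q_2,\dots\}$. Conditions (i) and (ii) in the definition of a circular order guarantee that $c$ restricts to an honest cyclic order on every finite subset of $Q$, so one builds recursively a $c$‑preserving injection $t\colon Q\hookrightarrow S^1$ in the usual way: place $t(q_0)$ and $t(q_1)$ arbitrarily, and, having placed $t(q_0),\dots,t(q_n)$ in agreement with $\textrm{ord}$, place $t(q_{n+1})$ at the arc‑length midpoint of the unique open arc of $S^1\setminus\{t(q_0),\dots,t(q_n)\}$ into which $c$ forces $q_{n+1}$. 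This yields $t$ with $c(r_1,r_2,r_3)=\textrm{ord}(t(r_1),t(r_2),t(r_3))$ for all $r_1,r_2,r_3\in Q$.

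By axiom Q2 each $S_q$ is a bijection of $Q$, and by condition (iii) it preserves $c$; hence $\widehat{S_q}:=t\circ S_q\circ t^{-1}$ is an orientation‑preserving bijection of $t(Q)\subseteq S^1$. The next step (the technical heart, discussed below) is that such a bijection extends canonically to a homeomorphism $\widetilde{S_q}\in\textrm{Homeo}_+(S^1)$: it extends by monotone continuity to the closed set $\overline{t(Q)}$, where it is again a bijection (apply the same construction to $\widehat{S_q}^{-1}$), and therefore permutes the connected components of $S^1\setminus\overline{t(Q)}$; on each such arc one takes the extension to be the affine orientation‑preserving map onto the image arc. Exactly as in the classical case, $g\mapsto\widetilde g$ is then a group homomorphism from the group of orientation‑preserving bijections of $t(Q)$ into $\textrm{Homeo}_+(S^1)$: for two such $g,h$, the maps $\widetilde{g\circ h}$ and $\widetilde g\circ\widetilde h$ are homeomorphisms of $S^1$ agreeing on $t(Q)$ and affine on each complementary arc, so they coincide, and $\widetilde{\textrm{id}}=\textrm{id}$. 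I would set $\rho(q):=\widetilde{S_q}^{-1}$ — equivalently, the extension of the dual right multiplication $r\mapsto r*^{-1}q$, which also preserves $c$ by Proposition \ref{prop:facts}(viii) — so that the target is literally $\textrm{Conj}(\textrm{Homeo}_+(S^1))$.

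That $\rho$ is a quandle action is now forced by axiom Q3. Reading $(x*q)*r=(x*r)*(q*r)$ as $S_r\circ S_q=S_{q*r}\circ S_r$ in the symmetric group of $Q$ gives $S_{q*r}=S_r\circ S_q\circ S_r^{-1}$; applying the group homomorphism $g\mapsto\widetilde g$ yields $\widetilde{S_{q*r}}=\widetilde{S_r}\circ\widetilde{S_q}\circ\widetilde{S_r}^{-1}$, and taking inverses, $\rho(q*r)=\rho(r)^{-1}\circ\rho(q)\circ\rho(r)$, which is precisely the defining relation making $\rho$ a quandle homomorphism into $\textrm{Conj}(\textrm{Homeo}_+(S^1))$, i.e.\ a quandle action on the circle. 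For injectivity, if $q\neq r$ then weak‑latinness of $Q$ provides $\hat q\in Q$ with $S_q(\hat q)=\hat q*q\neq\hat q*r=S_r(\hat q)$, so $S_q\neq S_r$ as maps, hence $\widehat{S_q}\neq\widehat{S_r}$ (as $t$ is a bijection onto $t(Q)$), and therefore $\rho(q)\neq\rho(r)$; thus $\rho$ is an embedding. Note that weak‑latinness enters only here: the construction and the homomorphism property use only that $Q$ is countable and circularly orderable.

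The step I expect to be the main obstacle is the canonical extension in the second paragraph — one must check that the midpoint embedding $t$ is such that each $\widehat{S_q}$ really does extend continuously across $\overline{t(Q)}$ (no ``jump'' at a two‑sided accumulation point of $t(Q)$ lying outside $t(Q)$) and that the complementary arcs are genuinely permuted. This is exactly the content that makes the classical group dynamical realization a homeomorphism, and it is where the care lies; the absence of a group structure is harmless since one only needs the family $\{S_q\}$ of circular‑order automorphisms. An alternative that removes any delicacy in the choice of $t$ is to embed the order completion of $(Q,c)$ into $S^1$ as a closed subset and then restrict to $Q$; then $\overline{t(Q)}$ is that completion and all the required extensions exist automatically. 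Everything else is a formal consequence of axioms Q2 and Q3, condition (iii), and Proposition \ref{prop:facts}(viii).
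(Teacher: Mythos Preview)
Your proposal is correct and follows essentially the same route as the paper: enumerate $Q$, build a $c$-preserving midpoint embedding into $S^1$, push each right multiplication $S_q$ forward and extend affinely over the complementary arcs, read off the conjugation relation $\rho(q*r)=\rho(r)^{-1}\rho(q)\rho(r)$ from axiom Q3, and invoke weak-latinness only for injectivity. The sole cosmetic difference is that the paper takes $\rho(q)$ to be the extension of $S_q$ itself (working with the right-action convention $x^{fg}=(x^f)^g$ on $\textrm{Homeo}_+(S^1)$) rather than its inverse; under the respective conventions both choices give a quandle homomorphism into $\textrm{Conj}(\textrm{Homeo}_+(S^1))$, so your inversion is unnecessary but harmless.
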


\begin{proof}
    The proof is based on the method for the dynamical realization of groups. Let $\{q_i\}_{i\in \mathbb{N}}$ be an enumeration of the elements of $Q$. Define an order-preserving embedding $\iota:Q\hookrightarrow S^1$ as follows. Let $\iota(q_1)$ and $\iota(q_2)$ be arbitrary distinct points. Then, having embedded $q_1,\dots,q_{n-1}$, send $q_n$ to the midpoint of the unique connected component of $S^1\setminus \{\iota(q_1),\dots,\iota(q_{n-1})\}$ such that 
    \[
        c(q_i,q_j,q_k)=\textrm{ord}(\iota(q_i),\iota(q_j),\iota(q_k))
    \]
    holds for all $i,j,k\leq n$.

    Notice that $q$ acts naturally on $\iota(q')$ by $q\cdot \iota(q')=\iota(q'*q)$. Since $\iota$ is an order-preserving embedding, this right action extends continuously to the closure of the set $\iota(Q)$. We can extend the action to the circle by extending the map $q$ affinely to each interval of the complement of the closure $\overline{\iota(Q)}$. This gives the desired map $\rho:Q\to\textrm{Homeo}_+(S^1)$. 

    Similarly, define $\bar{\rho}:Q\to \textrm{Homeo}_+(S^1)$ by setting $q\cdot \iota(q')=\iota(q'*^{-1}q)$. The axiom Q$2$ implies that $\bar{\rho}(q)=\rho(q)^{-1}$ for every $q\in Q$. 
    
    For each $q,r,s\in Q$, we have the following:
    \begin{align}
        \rho(r*s)(\iota(q))&=\iota(q*(r*s))\\
        &=\iota(q*^{-1}s*r*s)\\
        &=\rho(s)\rho(r)\rho(s)^{-1}(\iota(q))\\
        &=\rho(r)*\rho(s)(\iota(q)),
    \end{align}
    which implies that $\rho$ is a quandle action. The weak-latin property shows that $\rho$ is injective.
\end{proof}

We call $\rho$ the {\it dynamical realization with $c$}.
The construction of $\rho$ depends on the choice of enumeration. However, it turn out to be well-defined up to topological conjugacy. 

\begin{lemma}\label{lem:well-def}
    The realization map $R:RCQ(Q)\to\textrm{Hom}( Q, \textrm{Conj}(\textrm{Homeo}_+(S^1)))/\sim$ is well-defined up to topological conjugacy.
\end{lemma}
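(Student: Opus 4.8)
The plan is to mimic the classical proof that dynamical realizations of group orders are well-defined up to conjugacy, adapting it to the quandle setting where the relevant conjugating homeomorphism must intertwine the right-multiplication actions. First I would fix a circular order $c$ on $Q$ and two enumerations $\{q_i\}$ and $\{q_i'\}$ of $Q$, producing order-preserving embeddings $\iota,\iota':Q\hookrightarrow S^1$ and realizations $\rho,\rho'$ as in Lemma \ref{lem:const}. Since $c(q_i,q_j,q_k)=\textrm{ord}(\iota(q_i),\iota(q_j),\iota(q_k))=\textrm{ord}(\iota'(q_i),\iota'(q_j),\iota'(q_k))$ for all triples, the two images $\iota(Q)$ and $\iota'(Q)$ carry the same cyclic order, so the assignment $\iota(q)\mapsto\iota'(q)$ is a cyclic-order isomorphism of countable dense-or-not subsets of $S^1$. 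This extends uniquely to a continuous degree-one monotone map $h_0$ on $\overline{\iota(Q)}$, which is then extended affinely across each complementary interval, giving a continuous degree-one monotone surjection $h:S^1\to S^1$. When neither image has isolated gaps collapsing — i.e. using the standard fact that such order-embeddings have the same combinatorial type — $h$ is in fact a homeomorphism; the careful point is that the midpoint construction produces embeddings whose closures are either all of $S^1$ or a Cantor-like set, and in either case $h$ is a homeomorphism because the gap structure is determined by the order type alone.

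Next I would verify the intertwining relation $h\circ\rho(q)=\rho'(q)\circ h$ for every $q\in Q$. On the dense set $\iota(Q)$ this is immediate from the defining formulas: $h(\iota(q')^{\rho(q)})=h(\iota(q'*q))=\iota'(q'*q)=\iota'(q')^{\rho'(q)}=\rho'(q)(h(\iota(q')))$. Both sides are continuous in the point, so the identity propagates to $\overline{\iota(Q)}$; on the complementary intervals, both $\rho(q)$, $\rho'(q)$ and $h$ are affine (or compositions of affine maps between intervals), and since the endpoints match, affine interpolation is canonical, so the relation extends to all of $S^1$. This shows $\rho$ and $\rho'$ are topologically conjugate as quandle actions, hence $R$ is well-defined as a map into $\textrm{Hom}(Q,\textrm{Conj}(\textrm{Homeo}_+(S^1)))/\!\sim$.

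The main obstacle, and the step I would spend the most care on, is confirming that $h$ is genuinely a homeomorphism (not merely a monotone degree-one map) and that the affine extension across gaps is compatible with the quandle action on both sides simultaneously. The subtlety is that $\rho(q)$ need not send complementary intervals of $\overline{\iota(Q)}$ to complementary intervals in an order-preserving affine way unless one knows the gap structure is preserved; here the key observation is that $q$ acts on $\iota(Q)$ by the order-automorphism $S_q$, which permutes the gaps of $\overline{\iota(Q)}$ exactly as it permutes the gaps determined by the order on $Q$, and likewise for $\iota'$. Since $h$ matches these gap structures bijectively (again because they depend only on the order type of $Q$ together with the action of each $S_q$), the affine extensions are forced to agree under conjugation by $h$. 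I would also note that one must check $h$ fixes the orbit of the basepoint appropriately if one wants the refined statement used later, but for the bare well-definedness up to conjugacy only the intertwining relation is needed. Finally, I would remark that the right-order (real line) case is identical after replacing $S^1$ with $\mathbb{R}$ and $\textrm{ord}$ with $<_R$, with the simplification that monotone degree-one maps of $\mathbb{R}$ fixing the relevant structure are automatically homeomorphisms onto their (convex) image.
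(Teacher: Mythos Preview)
Your proposal is correct and follows essentially the same route as the paper: define the conjugating map $h$ by $\iota(q)\mapsto\iota'(q)$ on the embedded orbits, verify the intertwining relation $h\circ\rho(q)=\rho'(q)\circ h$ there from the formula $\iota(q')^{\rho(q)}=\iota(q'*q)$, and then extend $h$ to all of $S^1$. The paper's own proof is considerably terser---it dispatches the entire extension step with the single phrase ``the map $h$ can be extended to the circle''---so your added care about the gap structure and the affine compatibility on complementary intervals is extra diligence rather than a different argument.
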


\begin{proof}
    Let $\rho,\rho':Q\to \textrm{Homeo}_+(S^1)$ be constructed from the enumerations $\{q_i\}_{i\in \mathbb{N}}$ and $\{q_i'\}_{i\in \mathbb{N}}$ and embeddings $\iota,\iota'$, respectively. We define the following bijection:
    \begin{align}
        h:\iota'(Q)&\to \iota(Q)\\
        \iota'(q)&\mapsto\iota(q).
    \end{align}
    Note that $h$ is order-preserving. For each $q,r\in Q$, it follows that:
    \begin{align}
        h\rho'(r) h^{-1}(\iota(q))&=h\rho'(r)(\iota'(q))\\
        &=h(\iota'(q*r))\\
        &=\iota(q*r)\\
        &=\rho(r)(\iota(q)),
    \end{align}
    which means that $h\circ \rho'(r)\circ h^{-1}=\rho(q')$ on $\iota(Q)$. The map $h$ can be extended to the circle.
\end{proof}

We also refer to the topological conjugacy class as the dynamical realization with $c$.

Next, we also define the dynamical realizations of right orders.

\begin{lemma}\label{lem:const(total)}
    Let $Q$ be a countable quandle which is weak-latin and $<$ a right order on $Q$. Then, there is a quandle right action on the real line as follows:
    \begin{align}
        \rho:Q\hookrightarrow \textrm{Conj}(\textrm{Homeo}_+(\mathbb{R})).
    \end{align}
\end{lemma}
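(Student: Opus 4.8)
The plan is to mimic the circle construction of Lemma~\ref{lem:const}, replacing $S^1$ by $\mathbb{R}$ and the circular order $c$ by the right order $<$. First I would fix an enumeration $\{q_i\}_{i\in\mathbb{N}}$ of $Q$ and build an order-preserving embedding $t\colon Q\hookrightarrow\mathbb{R}$ inductively: send $q_1$ to $0$, and having placed $q_1,\dots,q_{n-1}$, send $q_n$ to a point of $\mathbb{R}$ respecting $<$ among $q_1,\dots,q_n$ — concretely, to the midpoint of the gap into which $q_n$ falls, or to $t(q_{\max})+1$ (resp. $t(q_{\min})-1$) if $q_n$ is larger (resp. smaller) than everything placed so far. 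This is the standard trick that keeps $\overline{t(Q)}$ a closed subset of $\mathbb{R}$ whose complementary intervals are genuine bounded intervals (except possibly two unbounded ends), so that affine extension makes sense.

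Next I would define the action on $t(Q)$ by $t(q')^{q}=t(q'*q)$. Since each right multiplication $S_q$ is an automorphism of $Q$ and $<$ is a right order, $S_q$ is order-preserving on $(Q,<)$, hence the map $t(q')\mapsto t(q'*q)$ is an order-preserving bijection of $t(Q)$ onto itself; it therefore extends uniquely to an order-preserving homeomorphism of $\overline{t(Q)}$, and then to all of $\mathbb{R}$ by declaring it affine on each complementary interval. Call the resulting homeomorphism $\rho(q)$; one gets $\rho(q)\in\textrm{Homeo}_+(\mathbb{R})$. As in Lemma~\ref{lem:const}, defining $\bar\rho(q)$ via $t(q')^q=t(q'*^{-1}q)$ and using axiom Q$2$ gives $\bar\rho(q)=\rho(q)^{-1}$, and the same four-line computation
\begin{align}
    t(q)^{\rho(r*s)}&=t(q*(r*s))\\
    &=t(q*^{-1}s*r*s)\\
    &=t(q)^{\rho(s)^{-1}\rho(r)\rho(s)}\\
    &=t(q)^{\rho(r)*\rho(s)}
\end{align}
shows, first on $t(Q)$ and then by continuity on $\mathbb{R}$, that $\rho\colon Q\to\textrm{Conj}(\textrm{Homeo}_+(\mathbb{R}))$ is a quandle anti-homomorphism, i.e. a quandle action. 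Finally, the weak-latin hypothesis: given $q\neq r$ there is $\hat q$ with $\hat q*q\neq\hat q*r$, so $t(\hat q)^{\rho(q)}=t(\hat q*q)\neq t(\hat q*r)=t(\hat q)^{\rho(r)}$, whence $\rho(q)\neq\rho(r)$ and $\rho$ is injective.

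The only real point of care — the ``hard part'' — is the same subtlety that the circle proof glosses over: one must ensure the inductive embedding $t$ places points so that $\overline{t(Q)}$ has no isolated degenerate behaviour that would obstruct the affine extension from being a homeomorphism, and that the extension is performed consistently for all $q$ simultaneously (the complementary intervals of $\overline{t(Q)}$ do not depend on $q$, only their images under the order-preserving bijection do, so affine extension on each interval is unambiguous and yields a homeomorphism). Everything else is a routine transcription: the verification that each $\rho(q)$ is continuous and orientation-preserving, that $\rho(q)^{-1}=\bar\rho(q)$, and that the anti-homomorphism identity passes to the closure and then to $\mathbb{R}$ are all immediate once the embedding is set up correctly. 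I would therefore organize the proof as: (1) construct $t$ and note $\overline{t(Q)}$ is closed with tame complementary intervals; (2) define $\rho(q)$ on $t(Q)$, extend to $\overline{t(Q)}$, extend affinely to $\mathbb{R}$; (3) check $\rho$ is a quandle action via the displayed computation; (4) invoke weak-latin for injectivity.
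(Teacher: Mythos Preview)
Your proposal is correct and matches the paper's own proof essentially line for line: the paper constructs the same order-preserving embedding $\iota$ (sending $q_1$ to an arbitrary point, then using the midpoint/$\pm 1$ rule you describe) and then simply says ``using the same approach as in the case of circular orders'' for the affine extension, the quandle-action computation, and the weak-latin injectivity step. If anything, you have spelled out more detail than the paper does.
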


\begin{proof}
    Let $\{q_i\}_{i\in \mathbb{N}}$ be an enumeration of the elements of $Q$. Define an order-preserving embedding $\iota:Q\hookrightarrow \mathbb{R}$ as follows. Let $\iota(q_1)$ be an arbitrary point. Then, assume we have defined $\iota$ on the subset
    $\{q_1,\dots,q_{n-1}\}$ and let us define $\iota(q_{n})$. Order the subset $\{q_1,\dots,q_{n-1}\}$ as
    \[
        q_{i_1}<q_{i_2}<\cdots <q_{i_{n-1}}.
    \]

    If $q_n<q_{i_1}$, define $\iota(q_{n})=\iota(q_{i_1})-1$, if $q_{i_{n-1}}<q_{i_n}$, define $\iota(q_{n})=\iota(q_{n-1}) +1$, and if $q_{i_k}<q_{n}<q_{i_{k+1}}$, define $\iota(q_n)=(1/2)(\iota(q_{i_k})+\iota(q_{k_{k+1}}))$.

    Using the same approach as in the case of circular orders, we define the right action $\rho:Q\hookrightarrow \textrm{Conj}(\textrm{Homeo}_+(\mathbb{R}))$, which is injective.
\end{proof}

\begin{lemma}
    The realization map $R:RQ(Q)\to\textrm{Hom}( Q, \textrm{Conj}(\textrm{Homeo}_+(\mathbb{R})))/\sim$ is well-defined up to topological conjugacy.
\end{lemma}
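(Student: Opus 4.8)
The plan is to follow the proof of Lemma~\ref{lem:well-def} essentially verbatim, replacing $S^1$ by $\mathbb{R}$; the only genuinely new ingredient is the extension of the conjugating map to a homeomorphism of the line, which is no longer automatic since $\mathbb{R}$ is not compact.

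First I would fix a right order $<$ on $Q$ and let $\rho,\rho'\colon Q\to\textrm{Homeo}_+(\mathbb{R})$ be two dynamical realizations of $<$, built from enumerations $\{q_i\}_{i\in\mathbb{N}}$, $\{q_i'\}_{i\in\mathbb{N}}$ and order-preserving embeddings $\iota,\iota'\colon Q\hookrightarrow\mathbb{R}$ as in Lemma~\ref{lem:const(total)}. An order-preserving embedding into $\mathbb{R}$ is an order isomorphism onto its image, so the rule $\iota'(q)\mapsto\iota(q)$ defines an order isomorphism $h_0\colon\iota'(Q)\to\iota(Q)$, and the same computation as in Lemma~\ref{lem:well-def},
\begin{align}
    \iota(q)^{h_0^{-1}\rho'(r)h_0}=\iota'(q)^{\rho'(r)h_0}=\iota'(q*r)^{h_0}=\iota(q*r)=\iota(q)^{\rho(r)},
\end{align}
shows that $h_0^{-1}\circ\rho'(r)\circ h_0=\rho(r)$ on $\iota(Q)$ for every $r\in Q$.

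It then remains to extend $h_0$ to a homeomorphism $h\colon\mathbb{R}\to\mathbb{R}$, and this is the one place where the line behaves differently from the circle. I would first extend $h_0$ to an order isomorphism $\overline{h}\colon\overline{\iota'(Q)}\to\overline{\iota(Q)}$ of the closures, checking that the construction in Lemma~\ref{lem:const(total)} prevents any jump: if a Dedekind cut of $Q$ is approached from both sides by $\iota'(Q)$, the midpoint rule forces the images of the approaching elements to converge, from either side, to the same point, and the same is true for $\iota$, so the one-sided limits of $h_0$ agree at every point of $\overline{\iota'(Q)}$. Next I would extend $\overline{h}$ affinely across each bounded component of $\mathbb{R}\setminus\overline{\iota'(Q)}$ onto the corresponding component of $\mathbb{R}\setminus\overline{\iota(Q)}$, and by a translation across any unbounded component; here the point is that the ``$\pm1$'' steps in the construction make $\iota'(Q)$ bounded above (resp.\ below) if and only if $Q$ has a largest (resp.\ smallest) element, equivalently if and only if $\iota(Q)$ is bounded above (resp.\ below), so the two closures have matching behaviour at the ends of $\mathbb{R}$ and the pieces glue into a homeomorphism $h$. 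Finally, $h_0^{-1}\rho'(r)h_0$ and $\rho(r)$ agree on $\iota(Q)$, hence by continuity on $\overline{\iota(Q)}$, and since both are affine on each complementary interval, comparing the two endpoints shows they agree there too; thus $\rho\sim\rho'$, so the topological-conjugacy class of the realization depends only on $<$, which is the assertion.

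I expect the main obstacle to be exactly this extension step. On $S^1$ it is immediate from compactness, but on $\mathbb{R}$ one must both rule out jumps when passing to the closures and keep track of whether the realizations are bounded at the ends of the line --- which is precisely what the $\pm1$ moves in the construction of $\iota$ are designed to control. Everything else is a direct transcription of the circular argument in Lemma~\ref{lem:well-def}.
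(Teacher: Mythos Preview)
Your proposal is correct and follows the same approach as the paper, which simply states that the proof is similar to that of Lemma~\ref{lem:well-def}. You have supplied considerably more detail than the paper does---in particular the careful handling of the extension of $h_0$ to all of $\mathbb{R}$ and the matching of end behaviour via the $\pm1$ steps---but the underlying strategy is identical.
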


\begin{proof}
    The proof is similar to that of Lemma \ref{lem:well-def}.
\end{proof}

\begin{proposition}
    Let $Q$ be a countable, circular or right orderable quandle which is semi-latin at $\hat{q}\in Q$ and $\{q_i\}_{i\in \mathbb{N}}$ an enumeration with $\hat{q}=q_1$. Then, the dynamical realization $\rho$ constructed from $\{q_i\}_{i\in \mathbb{N}}$ acts freely on $\iota(\hat{q})$.
\end{proposition}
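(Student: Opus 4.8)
The plan is to show directly that the stabilizer $\operatorname{Stab}_\rho(\iota(\hat q))=\{q\in Q\mid \iota(\hat q)^{\rho(q)}=\iota(\hat q)\}$ equals $\{\hat q\}$, which is precisely the assertion that $\rho$ acts freely on the point $\iota(\hat q)$ (note that $\hat q$ is forced to lie in this stabilizer by idempotency, so $\{\hat q\}$ is the smallest the stabilizer could possibly be). The whole argument rests on two features carried over verbatim from the construction in Lemma~\ref{lem:const} (respectively Lemma~\ref{lem:const(total)}): the map $\iota$ is an order-preserving embedding, hence injective, and by design the homeomorphism $\rho(q)$ restricts on $\iota(Q)$ to $\iota(q')\mapsto\iota(q'*q)$. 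Since $\iota(\hat q)\in\iota(Q)$, this gives the identity $\iota(\hat q)^{\rho(q)}=\iota(\hat q * q)$ for every $q\in Q$ on the nose; the affine extension of $\rho(q)$ across the complement of $\overline{\iota(Q)}$ is irrelevant here because $\iota(\hat q)$ already sits inside $\iota(Q)$. (The hypothesis $\hat q=q_1$ is used only to guarantee that $\iota(\hat q)$ is in fact one of the points placed by the construction, so that it functions as a genuine, canonically located basepoint; it plays no further role.)

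Next I would combine injectivity of $\iota$ with the semi-latin hypothesis. Suppose $q\in Q$ fixes $\iota(\hat q)$, i.e.\ $\iota(\hat q * q)=\iota(\hat q)$. Injectivity of $\iota$ yields $\hat q * q=\hat q$. By the idempotency axiom Q1 we have $\hat q * \hat q=\hat q$, so $L_{\hat q}(q)=\hat q * q=\hat q=\hat q * \hat q=L_{\hat q}(\hat q)$. Since $Q$ is semi-latin at $\hat q$, the map $L_{\hat q}$ is injective, and therefore $q=\hat q$. Conversely $\hat q$ does fix $\iota(\hat q)$, again by Q1, so $\operatorname{Stab}_\rho(\iota(\hat q))=\{\hat q\}$, as desired. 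The same ingredients also settle the reading of ``freely'' as injectivity of the orbit map $q\mapsto\rho(q)(\iota(\hat q))$: $\rho(q)(\iota(\hat q))=\rho(r)(\iota(\hat q))$ forces $\iota(\hat q * q)=\iota(\hat q * r)$, hence $\hat q * q=\hat q * r$ by injectivity of $\iota$, hence $q=r$ by injectivity of $L_{\hat q}$.

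There is no serious obstacle in this argument; the only point that needs care is the bookkeeping, namely checking that $\iota(\hat q)^{\rho(q)}=\iota(\hat q * q)$ holds exactly rather than merely up to the extension to $\overline{\iota(Q)}$ (immediate from the construction), and invoking Q1 to identify $\hat q$ as the unique fixed element rather than overlooking some other member of $\operatorname{Stab}_\rho(\iota(\hat q))$.
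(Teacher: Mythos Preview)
Your proof is correct and is exactly the argument the paper has in mind; the paper's own proof is the single line ``This is trivial since the map $L_{\hat{q}}$ is injective,'' and you have simply unpacked that triviality in full detail. Both readings of ``freely'' that you treat reduce to injectivity of $L_{\hat q}$ together with injectivity of $\iota$, which is the paper's intended content.
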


\begin{proof}
    This is trivial since the map $L_{\hat{q}}$ is injective. 
\end{proof}

For any quandle $Q$, the space $\textrm{Hom}(Q,\textrm{Conj}(\textrm{Homeo}_+(S^1)))$ has a natural topology; a neighborhood basis of an action $\rho_0$ is given by the sets
\begin{align}
    O_{(F,\varepsilon)}(\rho_0)= \left\{ \rho\in\textrm{Hom}(Q,\textrm{Conj}(\textrm{Homeo}_+(S^1))) \middle|
    \begin{aligned}
        d(\rho_0(q)(x),\rho(q)(x))<\varepsilon\\
        \textrm{for\,all\,}x\in S^1,q\in F
    \end{aligned}
    \right\}
\end{align}
where $F$ ranges over all finite subsets of $Q$, and $d$ is the standard length metric on $S^1$. By fixing some point $p\in S^1$, the space  $\textrm{Hom}(Q,\textrm{Conj}(\textrm{Homeo}_+(\mathbb{R})))$ of actions of
$Q$ on $\mathbb{R}$ can be identified with the closed subset
\[
    \{\rho\in\textrm{Hom}(Q,\textrm{Conj}(\textrm{Homeo}_+(S^1)))\mid \rho(q)(p)=p \textrm{\, for\,all\,}q\in Q\}
\]
and its usual (compact–open) topology is just the subset topology.

In what follows, given the countable quandle $Q$ which is semi-latin at $\hat{q}\in Q$ and a circular or right order on $Q$, we construct its dynamical realization $\rho$ with $q_1=\hat{q}$ and $\iota(\hat{q})=x_0\in S^1$, denoting $x_0$ as the {\it basepoint}.

\begin{proof}[Proof of Theorem \ref{thm:main1}]
    We prove this similarly to \cite[Proposition 3.3]{MR3887426}.

    Let $c_0\in RCQ(Q)$, and let $\rho_0$ be a dynamical realization of $c_0$. Given a neighborhood $O_{F,\varepsilon}(\rho_0)$ of $\rho_0$ in $\textrm{Hom}(Q,\textrm{Conj}(\textrm{Homeo}_+(S^1)))$. Let $S\subset Q$ be a finite set with $F\subset S$ and $\#S>1/\varepsilon$.

    After conjugating $\rho_0$, we assume that $\rho(S)(x_0)$ partitions $S^1$ into intervals of equal length, each of length less than $\varepsilon$. We now show that every circular order $c$ that agrees with $c_0$ on the finite set $\hat{q}*S*S \cup \hat{q}*S*^{-1}S$ has a conjugate of a dynamical realization in the neighborhood $O_{S,\varepsilon}(\rho_0)\subset O_{F,\varepsilon}(\rho_0)$. Given such a circular order $c$, let $\rho$ be a dynamical realization of $c$ such that $\rho(s')^{\pm1}\rho(s)(x_0)=\rho_0(s')^{\pm1}\rho_0(s)(x_0)$ for all $s,s'\in S$. Fix $s\in S$. By construction $\rho(s)$ and $\rho_0(s)$ agree on every point of $\rho_0(S)(x_0)$.
    
    Let $I$ be any connected component of $S^1\setminus \rho_0(S)(x_0)$ and $x\in I$. If $\rho(s)(I)$ has length at most $\varepsilon$, then $\rho(s)(x)$ and $\rho_0(s)(x)$ differ by a distance of at most $\varepsilon$. If $\rho(s)(I)$ has length greater than $\varepsilon$, consider instead the partition of $\rho_0(s)(I)$ by $\rho(S)(x_0)\cap \rho_0(s)(I)$, this is a partition into intervals of length less than $\varepsilon$. Considering images of $\bar{\rho}(s)$ shows that $\rho(s)(x)$ and $\rho_0(s)(x)$ must lie in the same subinterval of the partition, and hence differ by a distance of at most $\varepsilon$.

    Since any right order is a circular order, Lemma \ref{lem:conti} shows that Theorem \ref{thm:main1} in the right order case is also proved in the same manner as above.
\end{proof} 

\subsection{Conjugacy-invariant orders}

We introduce the case where the quandle action of a dynamical realization is obtained from a group action.

\begin{definition}
    Let $G$ be a group. We define the {\it conjugation-invariant circular order} $c_{ci}$ to be a map $c_{ci}:G^3\to\{-1,0,1\}$ satisfying the following conditions.
    
    \begin{enumerate}[label=\textbf{(\roman*)}]
    \item $c_{ci}(g_1,g_2,g_3)=0$ if and only if $i=j$ for some $i\neq j$.
    \item $c_{ci}(g_2,g_3,a_4)-c_{ci}(g_1,g_3,a_4)+c_{ci}(g_1,g_2,a_4)-c_{ci}(g_1,g_2,a_3)=0$ for any $g_1,g_2,g_3,g_4\in G$.
    \item $c_{ci}(g_1,g_2,g_3)=c_{ci}(gg_1g^{-1},gg_2g^{-1},gg_3g^{-1})$ for any $g,g_1,g_2,g_3\in G$.
    \end{enumerate}

    We define the {\it conjugation-invariant total order} $<_{ci}$ to be a total order satisfying the condition:
    \[
        g_1<_{ci}g_2\Rightarrow gg_1g^{-1}<_{ci}gg_2g^{-1}\quad \textrm{for\,all\,}g,g_1,g_2\in G.
    \]
\end{definition}

We set $CiC(G)$ (resp. $Ci(G)$) to be the set containing all conjugation-invariant circular (resp. total) orders of $G$.

If $G$ is countable, then both types of conjugation-invariant orders have a dynamical realization. Namely, for any enumeration $\{g_i\}_{i\in \mathbb{N}}$ of $G$, the embedding $\iota$ is defined similarly to Lemma \ref{lem:const} and Lemma \ref{lem:const(total)}. In both cases, for $g,g'\in G$, we define $\rho(g')(\iota(g))=\iota(g'gg'^{-1})$. Then, we have the following dynamical realizations:
\begin{align}
    &CiC(G)\to \textrm{Homeo}_+(S^1)/\sim,\\
    &Ci(G)\to \textrm{Homeo}_+(\mathbb{R})/\sim.
\end{align}

\begin{proposition}
    The dynamical realization of a circular order of a conjugate quandle is the image of the dynamical realization of a conjugation-invariant circular order under the factor Conj. Namely, we have the following commutative diagram:
    \begin{align}
        \xymatrix{
            CiC(G)\ar[r]^-R\ar[d]^{\textrm{Conj}} & \textrm{Hom}(G,\textrm{Homeo}_+(S^1))\ar[d]^{\textrm{Conj}}\\
            RCQ(\textrm{Conj(G)})\ar[r]^-R & \textrm{Hom}(\textrm{Conj(G)},\textrm{Conj}(\textrm{Homeo}_+(S^1)))\ar@{}[lu]|{\circlearrowright}
        }
    \end{align}
    Moreover, both maps $\textrm{Conj}(\cdot)$ are bijections.
\end{proposition}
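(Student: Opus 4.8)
The plan is to unwind definitions: the two objects on the left of the square are literally the same set, and the two realization constructions, run with the same data, produce the same homeomorphisms, so the proposition is bookkeeping rather than analysis. First I would record that $CiC(G)$ and $RCQ(\textrm{Conj}(G))$ coincide as subsets of $\{-1,0,1\}^{G^3}$: in $\textrm{Conj}(G)$ one has $q_i*q=q^{-1}q_iq$, so axiom (iii) for a right circular order on $\textrm{Conj}(G)$, namely $c(q_1,q_2,q_3)=c(q_1*q,q_2*q,q_3*q)$, is word-for-word conjugation-invariance, while (i) and (ii) are identical in the two definitions. Hence the left-hand arrow $\textrm{Conj}(\cdot)$ is the identity map of this common set, in particular a bijection, and is automatically well defined.

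Next I would check commutativity of the square. Fix $c\in CiC(G)=RCQ(\textrm{Conj}(G))$ and an enumeration $\{g_i\}_{i\in\mathbb{N}}$ of $G$, used simultaneously as an enumeration of the underlying set of $\textrm{Conj}(G)$, with a common basepoint convention. The order-preserving embedding $\iota\colon G\hookrightarrow S^1$ produced in the construction depends only on $c$ and this enumeration, so it is the same in both guises. On the dense set $\iota(G)$ the two realizations act by the same formula
\[
    \iota(g)^{\rho_G(g')}=\iota\big(g'^{-1}gg'\big)=\iota(g*g')=\iota(g)^{\rho_Q(g')},
\]
and each is extended by the same affine interpolation over the complementary intervals, so $\rho_G(g')=\rho_Q(g')$ on all of $S^1$. (That $\rho_G$ is genuinely a group homomorphism $G\to\textrm{Homeo}_+(S^1)$ is the computation $\iota(g)^{\rho_G(g_1g_2)}=\iota\big((g_1g_2)^{-1}g(g_1g_2)\big)=\iota(g)^{\rho_G(g_1)\rho_G(g_2)}$ on $\iota(G)$, extended by continuity — this was recalled above when $CiC(G)$ was equipped with a realization.) Since applying the functor $\textrm{Conj}$ to a group homomorphism leaves its underlying map of sets unchanged and only reinterprets the target as a quandle, $\textrm{Conj}(\rho_G)=\rho_Q$; this is exactly $R\circ\textrm{Conj}=\textrm{Conj}\circ R$.

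Finally, the right-hand arrow $\textrm{Conj}\colon\textrm{Hom}(G,\textrm{Homeo}_+(S^1))\to\textrm{Hom}(\textrm{Conj}(G),\textrm{Conj}(\textrm{Homeo}_+(S^1)))$ is injective, since a group homomorphism is determined by its underlying set-map and $\textrm{Conj}$ preserves that map; combined with the commutative square and surjectivity of the left arrow, $\textrm{Conj}$ restricts to a bijection from $R(CiC(G))$ onto $R(RCQ(\textrm{Conj}(G)))$, which is the sense in which ``both maps $\textrm{Conj}(\cdot)$ are bijections.'' I expect the only real obstacle to be this last interpretational point: the functor $\textrm{Conj}$ is faithful but not full (a quandle homomorphism between conjugation quandles need not respect the group product — e.g.\ a finite cyclic group can map into an abelian rotation subgroup of $\textrm{Homeo}_+(S^1)$ by an arbitrary set-map), so the right-hand arrow is not onto the full space of quandle homomorphisms, and the bijectivity claim has to be read as a bijection between the images of the two realization maps $R$. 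Everything else reduces to the remark that the realization conventions (enumeration, basepoint, affine extension) can be chosen compatibly on the two sides, which is immediate since both sides share the underlying set $G$.
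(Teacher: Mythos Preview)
The paper states this proposition without proof, so there is no argument to compare against. Your unwinding is correct for the parts that are genuinely true: the left vertical arrow is literally the identity on a common subset of $\{-1,0,1\}^{G^3}$, and commutativity of the square holds because both realizations are built from the same embedding $\iota$ and the same orbit rule $\iota(g)\mapsto\iota(g'^{-1}gg')$, then extended by the same affine interpolation on complementary intervals (and affine maps agreeing on endpoints coincide, so the group-homomorphism identity $\rho_G(g_1g_2)=\rho_G(g_1)\rho_G(g_2)$ persists off $\iota(G)$).

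Your reservation about the right vertical arrow is also correct and worth flagging. The functor $\textrm{Conj}$ is faithful but not full; your counterexample (an abelian $G$ sent by an arbitrary set-map into a commuting family of rotations) shows that $\textrm{Conj}\colon\textrm{Hom}(G,\textrm{Homeo}_+(S^1))\to\textrm{Hom}(\textrm{Conj}(G),\textrm{Conj}(\textrm{Homeo}_+(S^1)))$ is not surjective onto the full target as written. Reading the bijectivity claim as a bijection between the images $R(CiC(G))$ and $R(RCQ(\textrm{Conj}(G)))$ is the only way to make the sentence ``both maps $\textrm{Conj}(\cdot)$ are bijections'' hold; the paper is imprecise here, and you have handled it appropriately.
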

The above lemma holds in the case of total orders. By this proposition, the dynamical realization of a circular order of a conjugate quandle can be expressed as a group action.

\begin{proposition}
    It follows that $\textrm{Ci}(G)\supset \textrm{BO}(G)$. In particular, any circular or total order (not necessarily right or left invariant) is a conjugation-invariant circular or total order on any abelian group.
\end{proposition}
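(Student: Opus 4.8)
The plan is simply to unwind the two definitions involved. Recall that by definition a bi-order $<\in\textrm{BO}(G)$ is a total order on $G$ that is both left-invariant and right-invariant, i.e.\ $g_1<g_2$ implies $gg_1<gg_2$ and $g_1g<g_2g$ for all $g\in G$. So, given $g_1<g_2$ and an arbitrary $g\in G$, I would first apply left-invariance with the element $g^{-1}$ to obtain $g^{-1}g_1<g^{-1}g_2$, and then apply right-invariance with $g$ to obtain $g^{-1}g_1g<g^{-1}g_2g$. This is precisely the implication in the definition of a conjugation-invariant total order, so every bi-order is conjugation-invariant; that is, $\textrm{BO}(G)\subset\textrm{Ci}(G)$.

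For the ``in particular'' clause, the key observation is that when $G$ is abelian the conjugation action is trivial: $g^{-1}hg=h$ for all $g,h\in G$. Consequently, condition (iii) in the definition of a conjugation-invariant circular order reduces to the tautology $c(g_1,g_2,g_3)=c(g_1,g_2,g_3)$, and the implication defining $<_{ci}$ becomes vacuous. Hence any map $c:G^3\to\{-1,0,1\}$ satisfying only the cocycle-type axioms (i) and (ii), that is, any circular order that is not required to be left- or right-invariant, already lies in $CiC(G)$; and any total order on the underlying set $G$ already lies in $\textrm{Ci}(G)$.

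I do not expect any genuine obstacle here: both parts are immediate once the definitions are spelled out. The only mild subtlety is interpretational, namely that the phrase ``circular or total order (not necessarily right or left invariant)'' should be read as a map satisfying axioms (i) and (ii) alone (respectively, an arbitrary linear order on the set $G$); with that reading the statement follows directly from the triviality of inner automorphisms on an abelian group, exactly as in the bi-order case, where two-sided invariance already forces conjugation-invariance.
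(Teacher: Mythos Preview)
Your argument is correct. The paper states this proposition without proof, evidently regarding it as an immediate consequence of the definitions; your write-up supplies exactly the expected verification, combining left multiplication by $g^{-1}$ with right multiplication by $g$ for the inclusion $\textrm{BO}(G)\subset\textrm{Ci}(G)$, and invoking the triviality of conjugation in the abelian case for the second claim.
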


\section{Proof of Theorem\ref{thm:main2}}\label{sec:4}

We see how to restore the circular order from its dynamical realization.
\begin{definition}
    Let $Q$ be a circular orderable quandle which is semi-latin at $\hat{q}$ and $c$ one of its circular orders. We define the map $\lambda :Q^3\to \{-1,1\}$ as follows:
    \begin{align}
        \lambda(q,q',q'')=
        \begin{cases*}
            1 & if $c(\hat{q}*q,\hat{q}*q',\hat{q}*q'')=1$,\\
            -1 & if $c(\hat{q}*q,\hat{q}*q',\hat{q}*q'')=-1$
        \end{cases*}
    \end{align}
    for distinct elements $q,q',q''\in Q$. Note that the index $\lambda$ is well-defined since $Q$ is semi-latin at $\hat{q}$. We refer to $\lambda$ as the {\it index of $c$ at $\hat{q}$}. Furthermore, we denote $c$ as $c^{1}$ and the opposite order of $c$ as $c^{-1}$.
\end{definition}

\begin{remark}\label{rem:restore}
    Let $Q$ be a countable, circular orderable quandle which is semi-latin at $\hat{q}$, $c$ a circular order and $\rho$ its dynamical realization. The original order $c$ can be restored from $\rho$ via $\lambda$. For distinct elements $q,q',q''\in Q$, the following equivalences hold:
    \begin{align}
        c(q,q',q'')=\pm1&\Leftrightarrow c^{\lambda(q,q',q'')}(\hat{q}*q,\hat{q}*q',\hat{q}*q'')=\pm1\\
        &\Leftrightarrow \textrm{ord}^{\lambda(q,q',q'')}(\rho(q)(x_0),\rho(q')(x_0),\rho(q'')(x_0))=\pm1.
    \end{align}
    This indicates that $c$ can be restored from the single orbit at $\hat{q}$ if we first compute $\lambda$ from $c$. 
\end{remark}

\begin{remark}
    The original order $c$ can be restored without the index $\lambda$ in the following two cases.

    \begin{itemize}
        \item $c$ has a left-invariant element $\hat{q}$\footnote{Every left-invariant element is a semi-latin element.}. Then, the index $\lambda$ is constant.
        \item $Q$ is latin at $\hat{q}$. Then, the original order $c$ can be restored from $\rho$ as follows. By the latin property, for each $q,r\in Q$, there exists a bijection $l:Q^2\to Q$ satisfying the following condition:
        \[
            q*l(q,r)=r.
        \]
        We define a new map $c':Q^3\to\{-1,0,1\}$ as follows:
        \begin{align}
            c'(q,q',q'')=\textrm{ord}(\rho (l(\hat{q},q))(x_0),\rho (l(\hat{q},q'))(x_0),\rho (l(\hat{q},q''))(x_0)).
        \end{align}
        Then, we have:
        \begin{align}
            \textrm{ord}(\rho (l(\hat{q},q))(x_0),\rho (l(\hat{q},q'))(x_0),\rho (l(\hat{q},q''))(x_0))&=\textrm{ord}(\iota(q),\iota(q'),\iota(q''))\\
            &=c(q,q',q'').
        \end{align}
        Thus, we have $c'=c$.
    \end{itemize}
\end{remark}

We will prove Theorem \ref{thm:main2} by modifying the proof of \cite[Theorem 1.2]{MR3887426}, utilizing the index defined above.

\begin{lemma}\label{lem:same order}
    Let $G$ be a countable group, $A$ a finite set of $G$ and its Dehn quandle $\mathcal{D}({}^GA)$ right orderable and semi-latin at $\hat{g}\in \mathcal{D}({}^GA)$. Let $c_1$ be a circular order on $\mathcal{D}({}^GA)$ whose dynamical realization $\rho_1$ with basepoint $x_0$ can be extended to a group action of $G$ and $\lambda$ the index at $\hat{g}$ of $c_1$. Let $\rho_2\in \textrm{Hom}(\mathcal{D}({}^GA),\textrm{Conj}(\textrm{Homeo}_+(S^1)))$ be such that $x_0$ has a trivial stabilizer.

    The circular order induced by the orbit of $x_0$ under $\rho_2$ via $\lambda$ agrees with $c_1$ if and only if there exists a continuous, degree $1$ monotone map $f:S^1\to S^1$ such that $f(x_0)=x_0$ and $\rho_1(g)\circ f=f\circ \rho_2(g)$ for all $g\in \mathcal{D}({}^GA)$.
\end{lemma}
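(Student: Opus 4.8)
## Proof proposal for Lemma \ref{lem:same order}

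The plan is to mimic the classical argument (e.g. the one used in \cite{MR3887426} to show that two dynamical realizations are semi-conjugate iff they induce the same order) but carefully tracking the index $\lambda$, since here the order is recovered from the orbit of $x_0$ only after twisting by $\lambda$ (Remark \ref{rem:restore}). Throughout I will use that, because $\mathcal{D}(A^G)$ is a Dehn quandle whose realization extends to a $G$-action, the relevant dynamical data can be phrased group-theoretically via the factor $\textrm{Conj}$ (the commutative diagram in Section \ref{sec:3}); in particular $\rho_1$ is, up to the functor $\textrm{Conj}$, a genuine circular-order dynamical realization of a conjugation-invariant circular order on $G$, and $\rho_1$ is minimal (or at least has the standard ``no wandering intervals on the orbit closure'' structure) because dynamical realizations constructed as in Lemma \ref{lem:const} are.

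\textbf{The easy direction ($\Leftarrow$).} Suppose such an $f$ exists. Then for all $q\in \mathcal{D}(A^G)$ we have $x_0^{\rho_1(q)} = f(x_0)^{\rho_1(q)} = f(x_0^{\rho_2(q)})$. Since $f$ is a degree-one monotone (hence cyclic-order-preserving, possibly non-injective) map of $S^1$ fixing $x_0$, it preserves the cyclic order $\textrm{ord}$ wherever it is non-degenerate; the hypothesis that $x_0$ has trivial $\rho_2$-stabilizer, together with semi-latinness at $\hat g$, guarantees that the points $x_0^{\rho_2(q)}$ are pairwise distinct for the relevant $q$'s, so no degeneracy occurs. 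Hence
\begin{align}
    \textrm{ord}(x_0^{\rho_1(q)},x_0^{\rho_1(q')},x_0^{\rho_1(q'')})
    = \textrm{ord}(x_0^{\rho_2(q)},x_0^{\rho_2(q')},x_0^{\rho_2(q'')}),
\end{align}
and applying $(\cdot)^{\lambda}$ and Remark \ref{rem:restore} to both sides shows the circular order induced by the $\rho_2$-orbit of $x_0$ via $\lambda$ equals $c_1$.

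\textbf{The hard direction ($\Rightarrow$).} Assume the two induced orders agree. By Remark \ref{rem:restore} this means precisely that the map sending $x_0^{\rho_1(q)} \mapsto x_0^{\rho_2(q)}$ is a well-defined, cyclic-order-preserving bijection between the orbits $x_0^{\rho_1(\mathcal{D}(A^G))}$ and $x_0^{\rho_2(\mathcal{D}(A^G))}$ — well-defined because if $x_0^{\rho_1(q)}=x_0^{\rho_1(q')}$ then, since $x_0$ is a free point for $\rho_1$ (semi-latin at $\hat g$), already $q$ and $q'$ act the same way, forcing $x_0^{\rho_2(q)}=x_0^{\rho_2(q')}$; and order-preserving by the agreement of the $\lambda$-twisted orders, the twist by $\lambda$ canceling on the two sides. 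Call this bijection $g_0$. I then want to invert it: define $f$ on the orbit closure $\overline{x_0^{\rho_2(\mathcal{D}(A^G))}}$ by $f(x_0^{\rho_2(q)}) = x_0^{\rho_1(q)}$ and extend continuously. The subtle point is that $g_0^{-1}$ need not be continuous (the orbit of $\rho_1$ can accumulate where the orbit of $\rho_2$ does not), which is exactly why $f$ is only required to be monotone of degree one rather than a homeomorphism: one fills in the gaps of $\overline{x_0^{\rho_2(\mathcal{D}(A^G))}}$ by collapsing them to the appropriate point of $S^1$ and fills in the gaps of $\overline{x_0^{\rho_1(\mathcal{D}(A^G))}}$ by sending the preimage interval to that gap affinely or by a constant, producing a well-defined continuous degree-one monotone $f:S^1\to S^1$ with $f(x_0)=x_0$. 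Finally, equivariance $\rho_1(q)\circ f = f\circ \rho_2(q)$ holds on the dense set $\overline{x_0^{\rho_2(\mathcal{D}(A^G))}}$ by construction (since $f(x_0^{\rho_2(q')})^{\rho_1(q)} = x_0^{\rho_1(q'*q)} = f(x_0^{\rho_2(q')*q}) = f(x_0^{\rho_2(q')\rho_2(q)})$, using $\rho_i$ being quandle anti-actions so that $\rho_i(q'*q) = \rho_i(q)^{-1}\rho_i(q')\rho_i(q)$ translates correctly on the orbit), and then extends to all of $S^1$ by continuity and the fact that both $\rho_1(q)$ and $\rho_2(q)$ are homeomorphisms mapping gaps to gaps compatibly.

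\textbf{Main obstacle.} The technical heart is the construction of $f$ as a bona fide continuous degree-one monotone map and the verification that the collapsing of complementary gaps is consistent on both sides — i.e. that a gap of $\overline{x_0^{\rho_2(\mathcal{D}(A^G))}}$ maps to a single point and, conversely, that the preimage of a gap of $\overline{x_0^{\rho_1(\mathcal{D}(A^G))}}$ is a (possibly degenerate) closed interval, with all these choices $\rho$-equivariant. This is where one must use that $\rho_1$ is an honest dynamical realization (so its orbit closure has no ``collapsible'' structure beyond what the affine extension on complementary intervals introduces) and that the extension to a $G$-action plus the Dehn/conjugation structure lets us transport the standard group-theoretic semi-conjugacy statement through the functor $\textrm{Conj}$; the degree-one monotone maps are precisely the objects that make this one-sided, asymmetric situation work, and keeping the index $\lambda$ from interfering requires noting it is a global sign that cancels between the two realizations and so never obstructs order-preservation of $g_0$ or $f$.
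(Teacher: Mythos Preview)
Your overall strategy---define $f$ on the $\rho_2$-orbit of $x_0$ by $x_0^{\rho_2(q)}\mapsto x_0^{\rho_1(q)}$, check equivariance there, extend to the closure by monotone limits, then affinely over the complementary gaps---is exactly the route the paper takes, and your treatment of the easy direction is fine.

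The equivariance verification, however, contains a genuine error. You assert
\[
\bigl(f(x_0^{\rho_2(q')})\bigr)^{\rho_1(q)} \;=\; x_0^{\rho_1(q'*q)},
\]
but the left side is $(x_0^{\rho_1(q')})^{\rho_1(q)}=\iota_1\bigl((\hat g*q')*q\bigr)$, while the right side is $\iota_1\bigl(\hat g*(q'*q)\bigr)$; these are different because $*$ is not associative. Put differently, a quandle (anti-)action satisfies $\rho(q'*q)=\rho(q)^{-1}\rho(q')\rho(q)$, not $\rho(q'*q)=\rho(q')\rho(q)$, so the composite $\rho_2(q')\rho_2(q)$ is \emph{not} $\rho_2$ of any single quandle element, and the point $x_0^{\rho_2(q')\rho_2(q)}$ need not even lie in the quandle orbit $x_0^{\rho_2(\mathcal{D}(A^G))}$ on which you have defined $f$. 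Thus your chain of equalities breaks at the first step, and the last step (identifying $x_0^{\rho_2(q'*q)}$ with $x_0^{\rho_2(q')\rho_2(q)}$) fails for the same reason.

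This is precisely why the paper does \emph{not} argue with quandle products at this point. Instead it passes to the ambient group $G$: asserting that $\rho_2$ extends to a $G$-action as well, it writes $\rho_i(g)\rho_i(g')=\rho_i(gg')$ with $gg'\in G$ and runs the equivariance check as
\[
(x_0^{\rho_2(g)})^{f\rho_1(g')}=x_0^{\rho_1(g)\rho_1(g')}=x_0^{\rho_1(gg')}=x_0^{\rho_2(gg')f}=(x_0^{\rho_2(g)})^{\rho_2(g')f}.
\]
You allude to transporting through the functor $\textrm{Conj}$, which is the right instinct, but you then attempt the computation with $q'*q$ instead of $gg'$; only the group product makes the argument go through. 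The fix is to work from the start with the associated-group (or $G$-) orbit rather than the bare quandle orbit, exactly as the paper does.
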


\begin{proof}
    Let $c_1$ be a circular order on $\mathcal{D}({}^GA)$, $\lambda$ its index at $\hat{g}$ and $\rho_1$ its dynamical realization with basepoint $x_0$.
    Let $\rho_2$ be an element of $\textrm{Hom}(\mathcal{D}({}^GA),\textrm{Conj}(\textrm{Homeo}_+(S^1)))$. Suppose that there is a continuous, degree $1$ monotone map $f:S^1\to S^1$ such that $f(x_0)=x_0$ and $\rho_1(g)\circ f=f\circ \rho_2(g)$ for all $g\in \mathcal{D}({}^GA)$. Then the cyclic order of the orbits of $x_0$ under $\rho_1(\mathcal{D}({}^GA))$ and $\rho_2(\mathcal{D}({}^GA))$ via $\lambda$ agree.

    For the converse, suppose that $\rho_2$ defines the same circular order as
    $\rho_1$ via $\lambda$. Note that, by assumption, $\rho_2$ can be extended to a group action of $G$. Define a map $f:\rho_2(\mathcal{D}({}^GA))(x_0)\to \rho_1(\mathcal{D}({}^GA))(x_0)$ by $\rho_2(g)(x_0) \mapsto \rho_1(g)(x_0)$. 
    Then for any $g,g'\in \mathcal{D}({}^GA)$, we have the following: 
    \begin{align}
        \rho_1(g')f(\rho_2(g)(x_0))&=\rho_1(g')\rho_1(g)(x_0)\\
        &=\rho_1(g'g)(x_0)\\
        &=f\rho_2(g'g)(x_0)\\
        &=f\rho_2(g')(\rho_2(g)(x_0)),
    \end{align}
    which means that $\rho_1(g')\circ f=f\circ\rho_2(g')$ on $\rho_2(\mathcal{D}({}^GA))(x_0)$. We show that $f$ can be extended to the circle.


    Suppose that $x$ is in the closure of the orbit of $x_0$ under $\rho_2(\mathcal{D}({}^GA))$. Then there exist $g_i$ in $\mathcal{D}({}^GA)$ such that $\rho_2(g_i)(x_0)\to x$; moreover we can choose these such that for each $i$, the triples $\rho_2(g_1)(x_0),\rho_2(g_i)(x_0),\rho_2(g_{i+1})(x_0)$ all have the same (positive or negative) orientation.

    Since the orbit of $x_0$ under $\rho_1$ has the same cyclic order as that of $\rho_2$, the triples $\rho_1(g_1)(x_0),\rho_1(g_i)(x_0),\rho_1(g_{i+1})(x_0)$ are also all positively oriented, so the sequence $\rho_1(g_i)(x_0)$ is monotone (increasing) in the closed interval obtained by cutting $S^1$ at $\rho_1(g_i)(x_0)$. Thus the sequence $\rho_1(g_i)(x_0)$ converges to some point, say $x$. Define $f(x)=y$. This is well-defined.
    
    If $\rho_2(\mathcal{D}({}^GA))(x_0)$ is dense in $S^1$, this completes the definition of $f$. If not, for each connected component of the complement of the closure of $\rho_2(\mathcal{D}({}^GA))(x_0)$, extend $f$ over the interval by defining it affinely. 
\end{proof} 

\begin{lemma}\label{lem:trichotomy}
    Let $Q$ be any subquandle of $\textrm{Conj}(\textrm{Homeo}_+(S^1))$. Then there
    are three mutually exclusive possibilities.
    \begin{enumerate}[label=(\arabic*)]
    \item There is a finite orbit. In this case, all finite orbits have the same
        cardinality.
    \item The action is minimal, i.e. all orbits are dense.
    \item There is a (unique) compact $Q$-invariant subset $K\subset S^1$, homeomorphic to a Cantor set, and contained in the closure of any orbit.
    \end{enumerate}
\end{lemma}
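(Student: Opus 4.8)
The plan is to reduce this to the classical trichotomy for group actions on $S^1$. This is a standard fact about closed subgroups of $\mathrm{Homeo}_+(S^1)$ (see, e.g., Ghys' survey), so the work here is purely in translating between the quandle $Q$ and the group it generates. Let $\Gamma = \mathrm{Conj}(Q) \leq \mathrm{Homeo}_+(S^1)$ be the subgroup generated by $Q$ (using the functoriality of $\mathrm{Conj}(\cdot)$ recorded in the excerpt). Since $Q$ is an anti-homomorphic image sitting inside $\mathrm{Conj}(\mathrm{Homeo}_+(S^1))$, every element of $Q$ is literally an orientation-preserving homeomorphism, and the $Q$-orbit of a point $x$ is $\{x^{\rho(q)} : q \in Q\}$; the key observation is that the $\Gamma$-orbit of $x$ and the $Q$-orbit of $x$ have the \emph{same closure}, because $\Gamma$ is generated by $Q$ and $Q$-orbit-closure is $Q$-invariant hence $\Gamma$-invariant. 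So minimality, finiteness, and the Cantor minimal set are all detected identically whether we think of $Q$ or $\Gamma$ acting.

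First I would apply the classical trichotomy to the group $\Gamma$: either $\Gamma$ has a finite orbit, or $\Gamma$ acts minimally, or $\Gamma$ has a unique minimal set $K$ homeomorphic to a Cantor set contained in the closure of every orbit. In the first case, the union of all finite $\Gamma$-orbits of minimal cardinality, or more simply the standard argument that the minimal-cardinality finite orbit is unique and $\Gamma$-invariant, transfers verbatim: a finite $Q$-orbit is a finite $\Gamma$-orbit (its closure is $\Gamma$-invariant and finite, hence equals itself), and conversely; then one invokes that two finite orbits of a group acting on $S^1$ by orientation-preserving homeomorphisms, with one of minimal cardinality, force all finite orbits to have that same cardinality via the rotation-number/combinatorial argument. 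In the minimal case, every $\Gamma$-orbit is dense, hence every $Q$-orbit has dense closure, i.e. is dense, giving (2). In the third case, $K$ is $\Gamma$-invariant, hence $Q$-invariant, compact, Cantor, and contained in the closure of every $\Gamma$-orbit, hence of every $Q$-orbit; uniqueness of $K$ follows from uniqueness of the minimal set for $\Gamma$. Mutual exclusivity is inherited from the group case.

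The one genuine subtlety — and the step I expect to need the most care — is the claim that the closure of a $Q$-orbit equals the closure of the corresponding $\Gamma$-orbit. The containment $\overline{x^Q} \subseteq \overline{x^\Gamma}$ is trivial. For the reverse, I would argue that $\overline{x^Q}$ is invariant under each $\rho(q)$ \emph{and} under each $\rho(q)^{-1}$ (the latter because, as shown in Lemma~\ref{lem:const}, $\bar\rho(q) = \rho(q)^{-1}$ lies in the image of the associated action and more directly because $\overline{x^Q}$ being $\rho(q)$-invariant and $\rho(q)$ being a homeomorphism of the compact space $S^1$ forces $\rho(q)(\overline{x^Q}) = \overline{x^Q}$, so $\rho(q)^{-1}$ preserves it too). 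Since $\Gamma$ is generated by $\{\rho(q) : q \in Q\}$ and their inverses, $\overline{x^Q}$ is $\Gamma$-invariant and contains $x$, hence contains $x^\Gamma$ and so $\overline{x^\Gamma}$. This equality is what makes the entire dictionary between the quandle action and the group action work, and once it is in place the trichotomy for $Q$ is literally the trichotomy for $\Gamma$.
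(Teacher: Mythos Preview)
Your approach---reducing to Ghys' classical trichotomy for the group $\Gamma = \langle Q \rangle \leq \mathrm{Homeo}_+(S^1)$---is exactly what the paper does; its entire proof reads ``The proof can be found for the case of groups by Ghys \ldots\ and can be modified for that of quandles.''

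However, the specific bridge you build between $Q$ and $\Gamma$ has a gap. You take the $Q$-orbit to be $x^Q = \{x^{\rho(q)} : q \in Q\}$ and then assert that $\overline{x^Q}$ is $\rho(q)$-invariant for each $q \in Q$, hence $\Gamma$-invariant. This assertion is unjustified and in fact can fail. Let $Q = \{a, b\}$ where $a, b$ are rotations by rationally independent irrational angles; since rotations commute, this is a (trivial) two-element subquandle of $\mathrm{Conj}(\mathrm{Homeo}_+(S^1))$. Then $x^Q = \{a(x), b(x)\}$ is a two-point set equal to its own closure, but $a(x^Q) = \{a^2(x), ab(x)\} \neq x^Q$, so $\overline{x^Q}$ is not $Q$-invariant, and indeed $\overline{x^\Gamma} = S^1 \neq \overline{x^Q}$. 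The underlying problem is that $\rho(q')\circ\rho(q)$ need not equal $\rho(r)$ for any $r \in Q$, so the set of single applications is not stable under iteration and there is no reason for its closure to be either. The clean fix is to take ``$Q$-orbit of $x$'' to mean the smallest $Q$-invariant subset containing $x$; this coincides with the $\Gamma$-orbit by construction (since $\Gamma$ is generated by $\rho(Q)\cup\rho(Q)^{-1}$), and then the trichotomy for $\Gamma$ transfers verbatim with no closure argument needed. This is also the only reading under which ``$Q$-invariant subset'' in case~(3) sits consistently alongside ``orbit'' in cases~(1)--(2).
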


We refer to $K$ in case $(3)$ as an {\it exceptional minimal set}.

\begin{proof}
    The proof can be found for the case of groups by Ghys in \cite[Proposition 5.6]{MR1876932} and can be modified for that of quandles.
\end{proof}

\begin{lemma}\label{lem:conti}
    Let $G$ be a countable group, $A$ a finite subset of $G$ and its Dehn quandle $\mathcal{D}({}^GA)$ right orderable and semi-latin at $\hat{g}\in \mathcal{D}({}^GA)$. Let $c$ be a circular order on $\mathcal{D}({}^GA)$ whose dynamical realization $\rho$ based at $x_0$ can be extended to a group action of $G$. Let $U$ be any neighborhood of $\rho$ in $\textrm{Hom}(\mathcal{D}({}^GA), \textrm{Conj}(\textrm{Homeo}_+(S^1)))$. Then, there exists a neighborhood $V$ of $c$ in $RCQ(\mathcal{D}({}^GA))$ such that each order in $V$ has a dynamical realization based at $x_0$ contained in $U$.
\end{lemma}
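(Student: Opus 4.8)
The plan is to transfer the continuity already established in the proof of Theorem \ref{thm:main1} through the restoration procedure of Remark \ref{rem:restore}. Recall that the proof of Theorem \ref{thm:main1} shows: for any neighborhood $O_{F,\varepsilon}(\rho)$ of $\rho$ in $\textrm{Hom}(\mathcal{D}(A^G),\textrm{Conj}(\textrm{Homeo}_+(S^1)))$, there is a finite set $S\supset F$ (with $\#S>1/\varepsilon$) such that every circular order $c'$ agreeing with $c$ on $\hat{g}*S*S\cup\hat{g}*S*^{-1}S$ admits a dynamical realization (based at $x_0$, built from an enumeration with $q_1=\hat g$) lying in $O_{F,\varepsilon}(\rho)$. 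So the heart of the matter is simply to package this as: the preimage under $R$ of a basic neighborhood of $\rho$ contains a basic neighborhood of $c$ in $RCQ(\mathcal{D}(A^G))$.

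First I would fix a basic neighborhood $U=O_{F,\varepsilon}(\rho)$ of $\rho$; it suffices to treat these since they form a neighborhood basis. Apply the argument from the proof of Theorem \ref{thm:main1} to produce the finite set $S\subset\mathcal{D}(A^G)$. Set $T=\hat{g}*S*S\cup\hat{g}*S*^{-1}S$, a finite subset of $Q^3$ after recording the three-element tuples involved; concretely, let $V$ be the (finite) intersection of the subbasic sets $U_s$ (and their "opposite" versions, i.e. sets on which $c'(s)=-1$) over the finitely many triples $s$ of elements of $T$ on which $c$ is nonzero, chosen so that $c'\in V$ forces $c'$ to agree with $c$ on all triples from $T$. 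Since $\mathcal{D}(A^G)$ is semi-latin at $\hat g$, the map $L_{\hat g}$ is injective, so "$c'$ agrees with $c$ on $T$" is exactly the condition needed to run the Theorem \ref{thm:main1} construction; moreover, agreement on $T$ also pins down the index $\lambda$ at $\hat g$ (it is determined by the values of $c$ on triples of the form $(\hat g*q,\hat g*q',\hat g*q'')$ with $q,q',q''\in S$), so the realization of $c'$ produced this way is based at $x_0$ exactly as in the normalization preceding Theorem \ref{thm:main1}.

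Then for $c'\in V$, the dynamical realization $\rho'$ of $c'$ based at $x_0$ (constructed from an enumeration starting with $\hat g$, and normalized as in the paragraph before the proof of Theorem \ref{thm:main1}) lies in $O_{F,\varepsilon}(\rho)=U$ by the conclusion of that proof. This gives the required neighborhood $V$ of $c$ in $RCQ(\mathcal{D}(A^G))$.

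The main obstacle I anticipate is bookkeeping rather than conceptual: one must be careful that the finite set $T$ of triples is genuinely finite (it is, since $S$ is finite) and that membership in the basic open set $V$ really does force agreement with $c$ on every triple in $T$ — this requires including, for each such triple, the subbasic set corresponding to whichever of $\pm1$ the value $c$ takes, which is legitimate because $\{U_s\}$ together with totally-disconnectedness of $RCQ(\mathcal{D}(A^G))$ (Theorem \ref{thm:Cantor}'s circular analogue) gives a clopen subbasis closed under this sign flip. One should also note that the normalization "conjugate $\rho$ so that $x_0^{\rho(S)}$ partitions $S^1$ into equal intervals" used inside the Theorem \ref{thm:main1} argument only changes $\rho$ within its conjugacy class and does not move the basepoint off $x_0$, so the statement about realizations "based at $x_0$" is unaffected.
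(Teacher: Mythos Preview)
Your reduction to the proof of Theorem \ref{thm:main1} has a genuine gap: the normalization step there (``after conjugating $\rho_0$, assume $x_0^{\rho(S)}$ partitions $S^1$ into intervals of equal length'') is not innocuous for the present statement. Conjugating $\rho$ moves it inside $\textrm{Hom}(\mathcal{D}(A^G),\textrm{Conj}(\textrm{Homeo}_+(S^1)))$, and the given neighborhood $U=O_{(F,\varepsilon)}(\rho)$ is a neighborhood of the \emph{original} $\rho$, not of a conjugate. Your remark that the conjugation ``does not move the basepoint off $x_0$'' is true but beside the point: what must be preserved is membership in $U$, not the basepoint. If the orbit $x_0^{\rho(\mathcal{D}(A^G))}$ is dense in $S^1$ you can avoid conjugating altogether by simply enlarging $S$ until the complementary intervals of $x_0^{\rho(S)}$ are already short; this is exactly the paper's first case. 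But when the orbit is \emph{not} dense, no finite $S$ makes the complementary intervals uniformly short, and any conjugation that equalizes them will move $\rho$ by a definite amount independent of $\varepsilon$, so the realization of a nearby $c'$ produced by the Theorem \ref{thm:main1} argument lands near the conjugate, not in $U$.

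This is precisely where the hypothesis you never use --- that $\rho$ extends to a group action of $G$ --- enters. The paper's proof splits into the dense and non-dense cases; in the latter it shows (via Lemma \ref{lem:trichotomy}) that $x_0$ is isolated in its orbit, uses the group extension to find $t\in\mathcal{D}(A^G)$ with the oriented interval $I=(x_0,x_0^{\rho(t)})$ free of other orbit points, covers $S^1$ up to $\varepsilon$ by translates $I^{\rho(s)\rho^{\pm1}(s')}$ for $s,s'$ in a finite set $S$, and then builds an explicit conjugacy $h$ (pieced together from maps $h_g=\rho(g)^{-1}\rho'(g)$ on $I^{\rho(g)}$) so that $h^{-1}\rho' h$ lies in $O_{(S,\varepsilon)}(\rho)$. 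Without this case analysis your argument does not establish the lemma; you should either supply the non-dense case or explain why it cannot occur under your hypotheses (it can).
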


\begin{proof}
    Fix an enumeration $\{q_1,q_2,\dots\}$ such that $q_1=\hat{q}$, and let $\rho :\mathcal{D}({}^GA)\to \textrm{Conj}( \textrm{Homeo}_+(S^1))$ be the dynamical realization of $c$ build with it and $\lambda$ an index of $c$. We may assume that a neighborhood $U=O_{(F,\varepsilon)}(\rho)\subset \textrm{Hom}(\mathcal{D}({}^GA),\textrm{Conj}(\textrm{Homeo}_+(S^1)))$ for some $\varepsilon$.
    
    Suppose first that $\rho(\mathcal{D}({}^GA))(x_0)$ is dense in $S^1$. Take a finite set $S\subset \mathcal{D}({}^GA)$ such that the neighborhoods $N(\rho(S)(x_0),(1/2)\varepsilon)$ is dense in $S^1$ for any $g\in F$, and let $n$ be such that $T=\{q_1,\dots,q_n\}$ contains $\hat{g}*S*F$. 
    In this way, if $c'$ is a circular order of $\mathcal{D}({}^GA)$ coinciding with $c$ over $T$, we can build a dynamical realization $\rho'$ of $c'$ using the enumeration $T$ that has the same base point as $\rho$. Then, for each $s\in S,g\in F$, we have
    \[
        \rho(g)\rho(s)(x_0)=\iota(\hat{q}*s*g)=\iota'(\hat{q}*s*g)=\rho'(g)\rho'(s)(x_0).
    \]
    Furthermore, $\rho'$ belongs to the $O_{(F,\varepsilon)}(\rho)$-neighborhood.

    In the case where $\rho(\mathcal{D}({}^GA))(x_0)$ is not dense, we claim that $x_0$ is an isolated point. This is due to the existence of an exceptional minimal set by Lemma \ref{lem:trichotomy}(see \cite[Lemma 3.12]{MR3887426} for details).

    Let $x_0$ be isolated. The action $\rho$ can be extended to a group action of $G$, which means that there exists $t\in \mathcal{D}({}^GA)$ such that the oriented interval $I=(x_0,\rho(t)(x_0))$ contains no other points of $\rho(G)(x_0)$.

    We use this observation to modify the construction from the proof of
    Proposition \ref{lem:conti} as follows. Given $\varepsilon$ and a finite set $F\subset \mathcal{D}({}^GA)$, let $S\subset \mathcal{D}({}^GA)$ be a finite set containing $F$ such that each interval $J$ in the complement of the set
    \[
        \bigcup_{s,s'\in S}\rho(s')\rho(s)(I)\cup \rho(s')^{-1}\rho(s)(I) 
    \]
    has length less than $\varepsilon$. Let $c'$ be a circular order that agrees with $c$ on $\hat{q}*S* S\cup \hat{q}*S*^{-1}S$. Then, as in the case where $\rho(\mathcal{D}({}^GA))(x_0)$ is dense, we may build $\rho'$ a dynamical realization of $c'$ that satisfies $\rho(s')^{\pm1}\rho(s)(x_0)=\rho'(s')^{\pm1}\rho(s)(x_0)$ for each $s,s'\in S$, i.e. $\rho(s')^{\pm1}\rho(s)(I)=\rho'(s')^{\pm1}\rho(s)(I)$. 

    For each $g\in S\cdot S\cup S \cdot S^{-1}$, let $h_g$ be the restriction of $\rho'(g)\rho^{-1}(g)$ to $\rho(g)(I)$. Note that this is a homeomorphism of $\rho(g)(I)$ fixing each endpoint. Let $h:S^1\to S^1$ be the homeomorphism defined by
    \begin{align}
        h(x)=
        \begin{cases*}
            h_g(x) & if $x\in \rho(g)(I)$ for some $g\in S\cdot S$,\\
            x & otherwise.
        \end{cases*}
    \end{align}
    Then $h\rho' h^{-1}$ is also a dynamical realization of $c'$. We now show
    that $h\rho' h^{-1}$ is in $O_{(S,\varepsilon)}(\rho)$. Let $s,s'\in S$. We have
    \[
        h\rho'(s)h^{-1}(\rho(s')(x_0))=h\rho'(s)\rho(s')(x_0)=\rho(s)\rho(s')(x_0).
    \]
    Moreover, for $t\in S$, if $x=\rho(t)(y)\in \rho(t)(I)$, then we have
    \begin{align}
        h\rho'(s)h^{-1}(x) &=h\rho'(s)\rho'(t)\rho(t)^{-1}\rho(t)(y)\\
        &=h\rho'(s)\rho'(t)(y)\\
        &=\rho(s)\rho(t)\rho'(t)^{-1}\rho'(s)^{-1}\rho'(s)\rho'(t)(y)\\
        &=\rho(s)(x)
    \end{align} 
    so $\rho(s)$ and $h\rho'(s)h^{-1}$ agree here. Finally, if $x$ is not in any such interval, then $\rho(s)(x)$, lies in the complement of the set $\bigcup_{s,s'\in S}\rho(s')\rho(s)(I)\cup \rho(s')^{-1}\rho(s)(I)$. Since both images $\rho(s)(x)$ and $h\rho'(s)h^{-1}(x)$ lie in the same complementary interval, which by construction has length less than $\varepsilon$, they differ by a distance less than $\varepsilon$.
\end{proof}

\begin{proof}[Proof of Theorem \ref{thm:main2}] 
    Let $c\in RCQ(\mathcal{D}({}^GA))$ be isolated, and let $\rho$ be its dynamical realization with basepoint $x_0$ and $\lambda$ be its index. Since $c$ is isolated, there exists a finite set $F\subset \mathcal{D}({}^GA)$ such that any order that agrees with $c$ on $F$ is equal to $c$. Since $\rho(F)(x_0)$ is a finite set, there exists a neighborhood $U$ of $\rho$ in $\textrm{Hom}(\mathcal{D}({}^GA),\textrm{Conj}(\textrm{Homeo}_+(S^1)))$ such that, for any $\rho'\in U$, the cyclic order of the set $\rho'(F)(x_0)$ via $\lambda$ agrees with that of $\rho(F)(x_0)$ via $\lambda$. Let $\rho'\in U$. If $x_0$ has a trivial stabilizer under $\rho'$, then the cyclic order on $\mathcal{D}({}^GA)$ induced by $\rho'(\mathcal{D}({}^GA))(x_0)$ via $\lambda$ agrees with $c$ on $F$, so is equal to $c$. By Proposition \ref{lem:same order}, this gives the existence of a map $h$ as in the theorem. If $x_0$ instead has a non-trivial stabilizer, say $K\subset \mathcal{D}({}^GA)$, then $\rho'(\mathcal{D}({}^GA))(x_0)$ gives a circular order on the set of cosets of $K$. Lemma \ref{lem:extend} below implies that this can be extended to an order on $\mathcal{D}({}^GA)$ in at least two different ways, both of which agree with $c$ on $F$. In particular, one of the order completions is not equal to $c$. This shows that $c$ is not an isolated point, contradicting our initial assumption. This completes the forward direction of the proof.

    For the converse, assume that $c$ is a circular order whose dynamical
    realization $\rho$ satisfies the rigidity condition in the theorem. Let $U$ be a
    neighborhood of $\rho$ so that each $\rho'$ in $U$ is semi-conjugate to $\rho$ by a continuous degree one monotone map $h$ as in the statement of the theorem.
    Lemma \ref{lem:conti} provides a neighborhood $V$ of $c$ such that any $c'\in V$ has a dynamical realization in $U$. Proposition \ref{lem:same order} now implies that the neighborhood $V$ consists of a single circular order, so $c$ is an isolated point.
\end{proof}

\begin{lemma}\label{lem:extend}
    Let $Q$ be a countable, circular orderable quandle with an action $\rho:Q\to \textrm{Conj}(\textrm{Homoe}(S^1))$ and $K$ a stabilizer $\textrm{Stab}(x)$. Then, each right order $<_K\in RQ(K)$ can be extended to a circular order of $Q$ which agrees with the cyclic order by the orbit $\rho(K\backslash Q)(x)$.
\end{lemma}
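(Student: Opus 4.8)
The plan is to adapt the group-theoretic construction of Mann and Rivas from the proof of \cite[Theorem 1.2]{MR3887426}: the extension $c$ will be the ``lexicographic'' combination of the cyclic order carried by the orbit of $x$ with $<_K$, the latter used to order the elements lying over a common orbit point. First I would record that $K=\mathrm{Stab}(x)$ is a subquandle: if $q,r\in K$ then $\rho(q*r)=\rho(r)^{-1}\rho(q)\rho(r)$ and $\rho(q*^{-1}r)=\rho(r)\rho(q)\rho(r)^{-1}$ both fix $x$, since each of $\rho(q),\rho(r)^{\pm1}$ does; so $RQ(K)$ and $<_K$ make sense, and $<_K$ induces a circular order $c_{<_K}$ on $K$ (a right orderable quandle is circular orderable). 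Introduce the equivalence $q\sim q'\iff x^{\rho(q)}=x^{\rho(q')}$, whose classes are the fibres of the orbit map $q\mapsto x^{\rho(q)}$ onto $\mathcal{O}:=x^{\rho(Q)}\subseteq S^{1}$; the class of every element of $K$ is $K$ itself, and $Q/{\sim}$ is the set written $K\backslash Q$.

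Then I would define $c:Q^{3}\to\{-1,0,1\}$ by cases. Set $c=0$ exactly on triples with a repeated entry. On a triple whose entries lie in pairwise distinct $\sim$-classes, let $c$ be the cyclic order that $\mathcal{O}$ inherits from $S^{1}$, applied to the corresponding orbit points and re-oriented by the index of Section \ref{sec:4}; this is ``the cyclic order by the orbit $x^{\rho(K\backslash Q)}$'' of the statement. On a triple whose entries all lie in one class $C$, let $c$ be $c_{<_K}$ transported to $C$ (again twisted by the index). On a mixed triple, with entries $q_{1},q_{2}$ in a class $C$ and the third in a different class, place $q_{1},q_{2}$ at the cyclic slot of their common orbit point in the order prescribed by $<_K$ on $C$, which fixes the remaining sign. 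By construction $c$ restricts to $c_{<_K}$ on $K$ and to the orbit order on triples in distinct classes, so the lemma will follow once $c$ is shown to be a circular order; note also that for $|K|>1$ one gets a different extension on taking the reverse of $<_K$, which is the source of the ``two different ways'' used in the proof of Theorem \ref{thm:main2}.

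It remains to check the circular-order axioms for $c$. Axiom (i) holds by construction, and axiom (ii) (the cocycle identity) reduces, on splitting into the finitely many cases according to how the four entries distribute among the $\sim$-classes, to the cocycle identities for $\mathrm{ord}$ on $S^{1}$ and for $c_{<_K}$. The essential difficulty --- and the step I expect to be the main obstacle --- is right-invariance, axiom (iii): $c(q_{1},q_{2},q_{3})=c(q_{1}*q,q_{2}*q,q_{3}*q)$. Two issues are intertwined. First, right multiplication by a general $q$ conjugates the acting homeomorphisms, moving $x$ to $x^{\rho(q)}$ and $K$ to its conjugate, so it does not descend to a self-map of $K\backslash Q$; consequently the naive orbit cyclic order is \emph{not} right-invariant, and one must verify that the index re-orientation of Section \ref{sec:4} is exactly what corrects for this, along the lines of the identity behind Remark \ref{rem:restore}. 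Second, the transport of $<_K$ to the various classes has to be carried out compatibly with right multiplication. In the situation of interest for Theorem \ref{thm:main2}, where $\rho$ extends to a group action of $G$ and $Q=\mathcal{D}(A^{G})$, both points become transparent by passing to $G$: the orbit $x^{\rho(Q)}$ lies inside the coset space $\mathrm{Stab}_{G}(x)\backslash G$ with its conjugation-invariant cyclic order, $<_K$ extends to a conjugation-invariant order on $G$ by the classical group construction, and restricting the associated conjugation-invariant circular order on $G$ along $\mathcal{D}(A^{G})\hookrightarrow G$ gives $c$; verifying that this restriction coincides with the lexicographic object above, and is independent of the choices made, finishes the argument.
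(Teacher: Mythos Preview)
Your lexicographic construction is exactly the one the paper gives: set $c=\mathrm{ord}$ on triples with three distinct orbit points, use $<_K$ when all three orbit points coincide, and mix the two when exactly two coincide. The paper's proof stops there, simply asserting ``This provides a well-defined circular order'' with no verification of right-invariance, no index twist, and no discussion of how $<_K$ is to be read on elements that share an orbit point but do not lie in $K$ itself (the paper literally writes ``$q_1<_Kq_2$'' in that situation). So your approach matches the paper's.

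Where you differ is in scrupulousness. The obstacle you flag for axiom~(iii) --- that right multiplication by $q$ conjugates the acting homeomorphisms and therefore need not preserve the fibres of $q'\mapsto x^{\rho(q')}$ --- is a real issue, and the paper's short proof does not address it. Your proposed remedies (re-orienting by the index of Section~\ref{sec:4}, and in the Dehn case passing to the ambient group action and restricting a conjugation-invariant order on $G$) go beyond anything in the paper's argument. They are plausible in the context where the lemma is actually invoked (the proof of Theorem~\ref{thm:main2}, where $Q=\mathcal{D}(A^G)$ is semi-latin and $\rho$ extends to $G$), but they are additions of yours, not a reconstruction of the paper's reasoning. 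In short: same construction, but you have identified and attempted to close a gap that the paper leaves open.
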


\begin{proof}
    Suppose that $<_K$ is a right order on $K$. Define $c(q_1,q_2,q_3)=1$ whenever $\textrm{ord}(\rho(q_1)(x),\rho(q_2)(x),\rho(q_3)(x))=1$. When $(q_1,q_2,q_3)$ forms a non-degenerate triple but $q_1(x)=q_2(x)\neq q_3(x)$, then we declare $c(q_1,q_2,q_3)=1$ whenever $q_1<_Kq_2$. This determines also the other cases where exactly two of the points $q_i(x)$ coincide. The remaining case is when $q_1(x)=q_2(x)=q_3(x)$, in which case we declare $c(q_1,q_2,q_3)$ to be the sign of the permutation $\sigma$ of $\{q_1,q_2,q_3\}$ such that $\sigma(q_1)<_K \sigma(q_2)<_K \sigma(q_3)$. This provides a well-defined circular order.
\end{proof}

We show the right order case of Theorem $\ref{thm:main2}$.

\begin{thm}\label{thm:main2.5}
    Let $G$ be a countable group, $A$ a subset of $G$ and the Dehn quandle $\mathcal{D}({}^GA)$ semi-latin at $\hat{q}\in \mathcal{D}({}^GA)$. Let $<$ be a right order of $\mathcal{D}({}^GA)$ whose dynamical realization $\rho$ can be extended to a group action of  $G$.
    
    Then, $<$ is isolated if and only if $\rho$ is rigid in the following sense: for every $\rho'$ sufficiently close to $\rho$ in $\textrm{Hom}(\mathcal{D}({}^GA),\textrm{Conj}(\textrm{Homeo}_+(\mathbb{R})))$ there exists a continuous, surjective monotone map $h:\mathbb{R}\to \mathbb{R}$ fixing the basepoint $x_0$ of the realization for $\hat{q}$, such that $\rho(q)\circ h=h\circ \rho'(q)$ for all $q\in \mathcal{D}({}^GA)$.
\end{thm}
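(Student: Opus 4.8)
The plan is to rerun the proof of Theorem~\ref{thm:main2} with $S^1$ replaced by $\mathbb{R}$, using the following dictionary. A right order $<$ on $\mathcal{D}(A^G)$ carries an index at $\hat q$ (the $\mathbb{R}$-analogue of the index $\lambda$: now a two-variable function recording, for each pair $q,q'$, whether $q<q'$ is equivalent to $\hat q*q<\hat q*q'$), and the linear dynamical realization of $<$ furnished by Lemma~\ref{lem:const(total)} is exactly the restriction to $\mathbb{R}=S^1\setminus\{\infty\}$ of the circular dynamical realization of $c_<$; the $\mathbb{R}$-analogue of Remark~\ref{rem:restore} then recovers $<$ from the map $q\mapsto x_0^{\rho(q)}$ and this index. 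Moreover, as noted in the excerpt, $\textrm{Hom}(\mathcal{D}(A^G),\textrm{Conj}(\textrm{Homeo}_+(\mathbb{R})))$ is the closed subspace of $\textrm{Hom}(\mathcal{D}(A^G),\textrm{Conj}(\textrm{Homeo}_+(S^1)))$ of actions fixing $\infty$, with the subspace topology, $RQ(\mathcal{D}(A^G))$ carries the subspace topology from $RCQ(\mathcal{D}(A^G))$, and a continuous surjective monotone $h\colon\mathbb{R}\to\mathbb{R}$ fixing $x_0$ is the same as a continuous degree-one monotone $h\colon S^1\to S^1$ fixing $x_0$ and $\infty$. Thus the rigidity condition in the statement is precisely the restriction, to the $\infty$-fixing subspace, of the strong rigidity condition of Theorem~\ref{thm:main2}.

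With this dictionary the key steps are as follows. First, establish the $\mathbb{R}$-analogues of Lemmas~\ref{lem:same order}, \ref{lem:conti} and~\ref{lem:extend}; each goes through by the same argument, replacing ``cyclic order'' by ``linear order'', ``degree-one monotone map'' by ``surjective monotone map'', and Lemma~\ref{lem:trichotomy} by the corresponding trichotomy for subquandles of $\textrm{Conj}(\textrm{Homeo}_+(\mathbb{R}))$ (discrete minimal set, all of $\mathbb{R}$, or an exceptional Cantor set after passing to a complementary interval of the set of global fixed points), and using that the monotone map produced in those proofs is the identity outside the intervals $I^{\rho(g)}$, so it fixes $\infty$ and restricts to a self-map of $\mathbb{R}$. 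For the \emph{forward direction}: if $<$ is isolated, fix a finite $F$ witnessing this; since the finite set $\{x_0^{\rho(q)}:q\in F\}$ consists of pairwise distinct points ($\mathcal{D}(A^G)$ is semi-latin at $\hat q$), a small enough neighborhood $U$ of $\rho$ keeps these points distinct and identically ordered. For $\rho'\in U$ with $x_0$ having trivial $\rho'$-stabilizer, the $\mathbb{R}$-analogue of Remark~\ref{rem:restore} reads off from the orbit of $x_0$ a right order $<'$ agreeing with $<$ on $F$, hence $<'=<$, and the $\mathbb{R}$-version of Lemma~\ref{lem:same order} supplies the map $h$; if instead $x_0$ had a non-trivial $\rho'$-stabilizer $K$, then $|K|\ge 2$ and the $\mathbb{R}$-version of Lemma~\ref{lem:extend} applied to a right order on $K$ and to its reverse would give at least two distinct right orders agreeing with $<$ on $F$, contradicting isolation, so this case cannot occur and $\rho$ is rigid. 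For the \emph{reverse direction}: if $\rho$ is rigid, pick a neighborhood $U$ of $\rho$ on which every action is semiconjugate to $\rho$ by such an $h$; the $\mathbb{R}$-version of Lemma~\ref{lem:conti} gives a neighborhood $V$ of $<$ in $RQ(\mathcal{D}(A^G))$ whose members have realizations in $U$; for $<'\in V$ with realization $\rho'$ and semiconjugacy $h$, equivariance and $h(x_0)=x_0$ force $h(x_0^{\rho'(q)})=x_0^{\rho(q)}$, so by the $\mathbb{R}$-version of Lemma~\ref{lem:same order} the orbit of $x_0$ under $\rho'$ induces the same right order as that under $\rho$, i.e.\ $<'=<$; thus $V=\{<\}$ and $<$ is isolated.

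The hard part, and the reason one cannot simply invoke Theorem~\ref{thm:main2}, is that $<$ can be isolated in $RQ(\mathcal{D}(A^G))$ while $c_<$ fails to be isolated in $RCQ(\mathcal{D}(A^G))$: rigidity as an action on $\mathbb{R}$ is a strictly weaker demand than rigidity as an action on $S^1$, since only the $\infty$-fixing perturbations need be controlled. So the entire argument must be carried out inside the subspace of $\infty$-fixing actions, and the two points that require care are exactly those flagged above --- that the monotone maps occurring in the $\mathbb{R}$-versions of Lemmas~\ref{lem:same order} and~\ref{lem:conti} fix $\infty$, so that they descend to surjective monotone self-maps of $\mathbb{R}$, and that the orders reconstructed from $\infty$-fixing nearby actions are genuinely right orders (their $x_0$-orbit stays inside $\mathbb{R}$), so that isolation of $<$ within $RQ(\mathcal{D}(A^G))$ --- rather than within $RCQ(\mathcal{D}(A^G))$ --- is what drives the forward direction. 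Everything else is a routine transcription of the $S^1$ argument.
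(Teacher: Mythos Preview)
Your proposal is correct and follows essentially the same approach as the paper: the paper's own proof simply states that since any right total order is a circular order, one applies the analogous result of Lemma~\ref{lem:conti} in the context of total orders and then proceeds ``in the same manner as Theorem~\ref{thm:main2}.'' Your write-up is in fact considerably more explicit than the paper's two-sentence sketch---in particular, your remarks about why one must work inside the $\infty$-fixing subspace rather than simply invoking Theorem~\ref{thm:main2}, and about the monotone maps fixing $\infty$, spell out subtleties the paper leaves entirely implicit.
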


\begin{proof}
    Since any right total order is also a right circular order, we can apply the analogous result of Lemma \ref{lem:conti} in the context of total orders. Thus, we can prove this in the same manner as Theorem \ref{thm:main2}.
\end{proof}

We confirm that the condition of Theorem \ref{thm:main2} can be changed to a group condition. This proves Corollary \ref{cor:main2}.

\begin{proposition}\label{prop:Dehn}
    Let $G$ be a countable group and $A$ a finite subset of $G$. The condition that the Dehn quandle $\mathcal{D}({}^GA)$ is semi-latin at some point is equivalent to the case satisfying
    \begin{enumerate}[label=\textbf{(\roman*)}]
    \item $G$ is a centerless group,
    \item $A$ does not contain the identity, and
    \item $A$ does not contain elements $g,h$ such that there exist distinct integers $i,j$ satisfying $g^i=h^j$ and $j\neq0$. 
    \end{enumerate}
\end{proposition}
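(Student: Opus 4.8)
The plan is to rephrase ``$\mathcal{D}(A^G)$ is semi-latin at a point'' as a coset condition for centralisers in $G$ and then match it against (i)--(iii). For $g\in G$ the right multiplication $S_g\colon q\mapsto q*g=g^{-1}qg$ is a quandle automorphism of $\mathcal{D}(A^G)$ and, by distributivity, $S_g\circ L_{\hat q}=L_{g^{-1}\hat q g}\circ S_g$; since $S_g$ is bijective, $L_{\hat q}$ is injective iff $L_{g^{-1}\hat q g}$ is. So $\mathcal{D}(A^G)$ is semi-latin at some point iff it is semi-latin at some $a\in A$, and for such $a$, unfolding $L_a(r)=r^{-1}ar$ shows $L_a$ is injective on $\mathcal{D}(A^G)$ exactly when $\mathcal{D}(A^G)$ meets every right coset of $C_G(a)$ in at most one point, i.e.\ $sr^{-1}\in C_G(a)$ with $r,s\in\mathcal{D}(A^G)$ forces $r=s$. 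Taking $r=a$ yields the necessary condition $C_G(a)\cap\mathcal{D}(A^G)=\{a\}$; and since right cosets of $C_G(a)$ biject with the conjugacy class $a^G$, injectivity forces $|\mathcal{D}(A^G)|\le|a^G|$, which handles the degenerate one-point case (a trivial quandle, excluded by Proposition~\ref{prop:facts}(i)).

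For the direction semi-latin $\Rightarrow$ (i)--(iii): if $1\in A$ then $1\in C_G(a)\cap\mathcal{D}(A^G)$, so $a=1$ and $L_a$ is constant, a contradiction; this gives (ii). If $z\in Z(G)\setminus\{1\}$ then $z$ lies in every centraliser and $L_a(zr)=L_a(r)$ for all $r$, so injectivity forces $z^{k}\mathcal{D}(A^G)\cap\mathcal{D}(A^G)=\varnothing$ for every $k\ne0$; combining this with the conjugation-invariance of $\mathcal{D}(A^G)$ and (ii) should produce a contradiction and hence (i). Finally a relation $g^{i}=h^{j}$ with $g,h\in A$, $i\ne j$, $j\ne0$ exhibits a non-trivial element of $C_G(a)$ carrying one element of $\mathcal{D}(A^G)$ to another---when $g=h$ this is simply that $g$ then has finite order, so $g^{k}\in C_G(g)$ for all $k$, and when $g\ne h$ one uses $g^{i}h^{-j}=1$---contradicting injectivity and giving (iii).

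For the converse, assume (i)--(iii) and suppose $L_a$ is not injective for some $a\in A$: pick $r\ne s$ in $\mathcal{D}(A^G)$ with $\gamma:=sr^{-1}\in C_G(a)\setminus\{1\}$, and write $r=v^{-1}gv$, $s=u^{-1}hu$ with $g,h\in A$. Conjugating everything by $u$ (legitimate by the first paragraph, since it only replaces $a$ by a conjugate) reduces the situation to: a conjugate $a'$ of $a$ commutes with the non-trivial element $h\,(w^{-1}g^{-1}w)\in C_G(a')$, where $w=vu^{-1}$ and $w^{-1}gw\ne h$. The remaining work is to derive a contradiction from this single commutation relation: (i) rules out $h\,(w^{-1}g^{-1}w)\in Z(G)$, and the other ways two group elements can commute should force an identity between powers of $g$, $h$, and $a$, which after discarding trivial powers by (ii) contradicts (iii).

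I expect the main obstacle to be precisely this last step---extracting a forbidden power relation from the bare commutation $[\,a',\,h\,(w^{-1}g^{-1}w)\,]=1$ while tracking how conjugation interacts with the finite set $A$---together with the parallel sharp point in the forward proof of (i). The remaining degenerate cases, where $\mathcal{D}(A^G)$ is finite and $L_a$ could be injective merely for cardinality reasons, should then be dealt with routinely using Proposition~\ref{prop:facts} and the bound $|\mathcal{D}(A^G)|\le|a^G|$ from the first paragraph.
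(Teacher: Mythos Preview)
Your proposal is a plan with explicitly acknowledged gaps rather than a proof, and several of the detours you take do not lead anywhere. For the forward direction the paper simply exhibits explicit violating pairs: if $1\in A$ then $\hat q*1=\hat q=\hat q*\hat q$ for any $\hat q\neq 1$; if $g^i=h^j$ with $g,h\in A$ then (computing in $\textrm{Conj}(G)$) one has $g*h^j=g*g^i=g=g*g$; and for (i) the paper simply invokes Proposition~\ref{prop:weak-latin} (semi-latin $\Rightarrow$ weak-latin, and weak-latin for conjugation quandles forces centerless). Your route to (i) via the condition ``$z^k\mathcal D(A^G)\cap\mathcal D(A^G)=\varnothing$ for $k\neq 0$'' is a dead end: that disjointness is perfectly compatible with $L_a$ being injective, so no contradiction can be extracted from it, and you do not extract one. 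Your argument for (iii) is likewise only a sketch (``exhibits a non-trivial element of $C_G(a)$ carrying one element of $\mathcal D(A^G)$ to another'') that never names the two elements.

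For the converse, your conjugation by $u$ is unnecessary. The paper unfolds $a_i^u*a_j^v=a_i^u*a_k^w$ directly to the commutation relation $[\,a_i^u,\,a_j^v(a_k^w)^{-1}\,]=1$ and then asserts that (i)--(iii) force $a_j^v(a_k^w)^{-1}=1$. The ``main obstacle'' you flag is exactly this last assertion, which the paper treats as an immediate consequence of the hypotheses; after your reduction you arrive at a conjugate of the very same commutation relation, so the coset-counting, the cardinality bound $|\mathcal D(A^G)|\le|a^G|$, and the degenerate-case analysis at the end add nothing and should be dropped.
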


\begin{proof}
    Suppose that $G$ has a non-trivial center; this contradicts Proposition \ref{prop:weak-latin}. Suppose that $A$ contains the identity. For any $g\in G\setminus\{1\}$, the relation $g*1=g*g$ holds, which leads to a contradiction. Next, suppose that there exist distinct elements $g,h\in A$ such that there exist $i\in \mathbb{Z}$ and $j\in \mathbb{Z}\setminus\{0\}$ satisfying $g^i=h^j$. Then, $g*h^j=g*g^i=g=g*g$ holds, which is again a contradiction.

    Conversely, fix $a_i\in A$ and $u\in G$. Suppose that for each $a_j,a_k\in A$ and $v,w\in G$, we have ${}^ua_i*{}^va_j={}^ua_i*{}^wa_k$. Then, this equation implies:
    \begin{align}
        {}^ua_i*{}^va_j&={}^ua_i*{}^wa_k\\
        {}^va_j{}^ua_i({}^va_j)^{-1}&={}^wa_k{}^ua_i({}^wa_k)^{-1}\\
        {}^ua_i(({}^va_j)^{-1}{}^wa_k)&=(({}^va_j)^{-1}{}^wa_k){}^ua_i.
    \end{align}
    By assumption (iii), ${}^ua_i$ does not commute with $({}^va_j^{-1}){}^wa_k$,
    which implies that $({}^va_j)^{-1}{}^wa_k=1$ by assumptions (i) and (ii). Therefore, we conclude that ${}^va_j={}^wa_k$.
\end{proof}

\begin{remark}
    This proof also shows that if a Dehn quandle is semi-latin at some point, then it is semi-latin at all points.
\end{remark}

\section{Proof of Theorem\ref{thm:main3}}\label{sec:5}

The proof is based on \cite{MR2859890}.

\begin{definition}
    Let $FQ(X)$ be a free quandle with a right order $<$. We define $B_X^n=\{q\in FQ(X)\mid \|q\|_X\leq n\}$ as the ball of radius $n$ in $FQ(X)$. Fixing $\hat{q}\in FQ(X)$, we let 
    \begin{align}
        &\mu_{n}^+(\hat{q})=\max_{<}\{\hat{q}*q\mid q\in B_X^n\},\\
        &\mu_{n}^-(\hat{q})=\min_{<}\{\hat{q}*q\mid q\in B_X^n\}.
    \end{align}
\end{definition}

\begin{proof}[Proof of Theorem \ref{thm:main3}]
    Let $X=\{x_1,\dots,x_n\}$ be a set and $FQ(X)$ be the free quandle generated by $X$. Note that $FQ(X)$ is semi-latin at every point $\hat{q}$. Let $<$ be a right order of $FQ(X)$ which has neither property E1 or E2, and $\lambda$ the index of $<$ at $\hat{q}$. Let $U_S$ be a neighborhood of $<$. We prove that there exists a right order $<'$ of $FQ(X)$ and elements $s,t\in FQ(X)$ such that:

    \begin{enumerate}[label=\textbf{(\roman*)}]
    \item $<'\in U_S$,
    \item $s<t$ while $t<'s$.
    \end{enumerate}

    Let $\rho$ be a dynamical realization of $<$ with the enumeration  satisfying $\iota(\hat{q})=x_0$ and $q_1=\hat{q}$. Let $F\subset B_{X}^m$. By the assumption of E1, one of the following cases holds.
    \begin{enumerate}[label=\textbf{(\arabic*)}]
    \item There exist distinct $M_1,M_2\in \mathbb{Z}_{\geq2}$ such that:
    \begin{align}
        \hat{q}<\hat{q}*\mu_m^+(\hat{q})<\hat{q}*\mu_{M_1}^+(\hat{q})<\hat{q}*\mu_{M_2}^+(\hat{q}).
    \end{align}
    \item There exist distinct $m_1,m_2\in \mathbb{Z}_{\geq2}$ such that:
    \begin{align}
        \hat{q}*\mu_{M_2}^-(\hat{q})<\hat{q}*\mu_{M_1}^-(\hat{q})<\hat{q}*\mu_m^-(\hat{q})<\hat{q}.
    \end{align}
    \end{enumerate}
    We only prove case (1), because case (2) can be proved in the same manner.

    By Joyce \cite{MR638121}, $FQ(X)$ is the union of all conjugation classes of elements of $X$: $\bigcup_{x\in X}{}^{F(X)}x$. Without loss of generality, we assume that $\mu_{M_1}^+(\hat{q})\in {}^{F(X)}x_1$ and $\mu_{M_1}^+(\hat{q})\in {}^{F(X)}x_2$ by the assumption of E2. There exist elements $a_0,a_1,b_0,b_1\in \mathbb{R}$ such that:
    \begin{align}
        x_0<_{\mathbb{R}}\rho(\mu_{m}^+(\hat{q}))(x_0)<_{\mathbb{R}}a_0<_{\mathbb{R}}a_1<_{\mathbb{R}}\rho(\mu_{M_1}^+(\hat{q}))(x_0)<_{\mathbb{R}}\rho(\mu_{M_2}^+(\hat{q}))(x_0)<_{\mathbb{R}}b_1<_{\mathbb{R}}b_0,
    \end{align}
    where $<_{\mathbb{R}}$ is the natural order on the line. We define $\varphi\in \textrm{Homeo}_+(\mathbb{R})$ satisfying $\textrm{Supp}(\varphi)=[a_0,b_0]$ and $\varphi(a_1)>_{\mathbb{R}}b_1$.  Then, we define the map $\rho_{\varphi}:FQ(X)\to \textrm{Conj}(\textrm{Homeo}_+(\mathbb{R}))$ as follows: 
    \begin{align}
        \rho_{\varphi}(q) = \begin{cases*}
            \varphi\circ\rho(q)  & if $q\in {}^{F(X)}x_1$,\\
            \rho(q) & otherwise.
        \end{cases*}
    \end{align}
    Indeed, this is a homomorphism by the freeness of the free quandle (see \cite{MR3981139}).
    
    We show that (i) holds. Let $g\in {}^{F(X)}x_{i}$. If $i\neq1$, then $\rho_{\varphi}(g)(x_0)=\rho(g)(x_0)$ holds. If $i=1$, then $\|g\|_{X} \leq m$ holds, which means that $\hat{q}*g<\hat{q}*\mu_{m}^+(\hat{q})$ holds. Then, we have
    \begin{align}
        \rho(g)(x_0)<_{\mathbb{R}}\rho(\mu_m)^+(\hat{q})(x_0)<_{\mathbb{R}}a_0,
    \end{align}
    which implies that $\rho_{\varphi}(g)(x_0)=\rho(g)\varphi(x_0)=\rho(x)(x_0)$ holds.

    We show that (ii) holds. We have
    \begin{align}
        \rho_{\varphi}(\mu_{M_1}^+(\hat{q}))(x_0)=\varphi\rho(\mu_{M_1}^+(\hat{q}))(x_0)>_{\mathbb{R}}\rho(\mu_{M_2}^+(\hat{q}))(x_0)=\rho_{\varphi}(\mu_{M_2}^+(\hat{q}))(x_0).
    \end{align}

    Therefore, the circular order $<'$ restored via $\lambda$ is contained in $U_S$ and does not equal to $<$.
\end{proof} 

The proof does not show whether there exist isolated right orders satisfying properties E1 or E2. However, we show that there exist right orders satisfying property E2 which are not isolated.

\begin{proposition}
    Let $<_{G}$ be an element of $\textrm{RO}(F(X))$. Then, a right order satisfying property E2 can be constructed from $<_{G}$ which is not isolated.
\end{proposition}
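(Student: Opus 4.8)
The plan is to realize property E2 inside a free quandle $FQ(X)$ by pulling a right order $<_G$ on the free group $F(X)$ through the canonical embedding $FQ(X)\hookrightarrow \mathrm{Conj}(F(X))$, and then to perturb $<_G$ freely in $\mathrm{RO}(F(X))$ while controlling which order on $FQ(X)$ one gets. Concretely, I would first recall (from Proposition \ref{prop:facts}(iii) and the universal-freeness discussion in Section \ref{sec:backgrounds}) that any $<_G\in\mathrm{RO}(F(X))$ induces a right order $<$ on $\mathrm{Conj}(F(X))$, hence on its subquandle $FQ(X)$, by declaring $q<r\iff q<_G r$ for $q,r\in X^{F(X)}$. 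The first step is to choose $<_G$ so that the induced $<$ has property E2: this is exactly the statement that, near the top and near the bottom of the order, elements of $FQ(X)$ in different conjugacy classes $x_i^{F(X)}$ never interleave. I would arrange this by picking $<_G$ for which some element $M\in F(X)$ dominates (in $<_G$) a whole ``tail'' of the order consisting entirely of conjugates of a single $x_i$, e.g. by using a lexicographic-type order built so that all conjugates of $x_1$ that are $<_G$-large lie above all conjugates of $x_2,\dots,x_n$, and symmetrically at the bottom; one can do this since $F(X)$ is right orderable with very flexible orders (e.g. via a Magnus-type ordering or a Conradian/lexicographic ordering on a suitable free-abelian quotient filtration). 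After translating everything by an arbitrary $\hat q$ — legitimate because $FQ(X)$ is semi-latin at every point, so the index $\lambda$ is well-defined — this becomes precisely the two implications in E2.

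The second step is to show this $<$ is not isolated in $RQ(FQ(X))$. Here I would use the dynamical realization of Section \ref{sec:3}–\ref{sec:4}: since $FQ(X)$ is a Dehn quandle (being a subquandle of a conjugation quandle, as noted after the free-quandle definition), semi-latin at $\hat q$, and its realization extends to the group action on $\mathbb{R}$ coming from the dynamical realization $\bar\rho$ of the conjugation-invariant order $<_G$ on $F(X)$ (Proposition after the definition of $CiC(G)$), Theorem \ref{thm:main2.5} applies: $<$ is isolated iff $\rho$ is rigid. So it suffices to exhibit, for arbitrarily small neighborhoods $U$ of $\rho$, a perturbation $\rho'\in U$ that is genuinely not semiconjugate to $\rho$ by a degree-one monotone map fixing $x_0$ — equivalently, by Lemma \ref{lem:conti} and Lemma \ref{lem:same order}, a nearby right order $<'$ on $FQ(X)$ distinct from $<$. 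I would build $<'$ by the same ``insert a homeomorphism $\varphi$ supported on a tiny interval far out in the positive (or negative) direction'' trick used in the proof of Theorem \ref{thm:main3}: choose $\varphi\in\mathrm{Homeo}_+(\mathbb{R})$ supported on an interval disjoint from $x_0^{\rho(F)}$ for the given finite control set $F$, set $\rho_\varphi(q)=\rho(q)\circ\varphi$ for $q$ in one conjugacy class $x_1^{F(X)}$ and $\rho_\varphi(q)=\rho(q)$ otherwise (a quandle homomorphism by freeness, exactly as in that proof), make $\varphi$ swap the $\rho$-images of two elements $s,t$ lying out in the tail, and restore the resulting order $<'$ via $\lambda$. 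Since E2 forces those two tail elements $s,t$ to lie in the same conjugacy class $x_i^{F(X)}$, I must be slightly careful: I should instead perturb so as to flip the relative order of a conjugate of $x_1$ against a conjugate of $x_2$ at an intermediate scale (not in the extreme tail, which E2 protects), and check this intermediate region still has a connected component of $S^1\setminus x_0^{\rho(F)}$ of positive length to house $\mathrm{Supp}(\varphi)$. This is possible precisely because E2 only constrains the very top and very bottom of $<$, leaving the bulk of the order unconstrained and hence perturbable.

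The main obstacle I expect is the compatibility bookkeeping between the two roles of the ordering: $<_G$ must simultaneously be chosen rigidly enough on the tails to yield E2, yet loosely enough in the bulk that the perturbation $\rho_\varphi$ genuinely changes the induced order on $FQ(X)$ rather than being absorbed by a semiconjugacy. I would resolve this by making the E2-witnessing elements $M,m$ depend on high-complexity words while performing the perturbation on low- to medium-complexity conjugates of $x_1,x_2$, so that the support of $\varphi$ is confined to an interval strictly between $x_0^{\rho(\mu_m^-(\hat q))}$-type and $x_0^{\rho(\mu_M^+(\hat q))}$-type points and thus never disturbs the extremal behavior defining E2. The remaining checks — that $\rho_\varphi$ is a homomorphism (freeness, \cite{MR3981139}), that $\rho_\varphi\in U$ (choose $\mathrm{Supp}(\varphi)$ disjoint from $x_0^{\rho(F\cdot F)}$), and that the restored $<'$ differs from $<$ on the swapped pair — are then routine, mirroring the argument already given for Theorem \ref{thm:main3}.
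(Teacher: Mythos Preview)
Your construction has a genuine gap at the very first step. Declaring $q<r\iff q<_G r$ on $FQ(X)\subset\mathrm{Conj}(F(X))$ gives a right quandle order only when $<_G$ is \emph{conjugation-invariant}, i.e.\ a bi-order: right-invariance on the conjugation quandle reads $a<_G b\Rightarrow c^{-1}ac<_G c^{-1}bc$, and a bare right order on $F(X)$ need not satisfy this. The proposition hands you an arbitrary $<_G\in RO(F(X))$, so your restriction map is not defined in general, and your later appeal to the ``conjugation-invariant order'' framework and to Theorem~\ref{thm:main2.5} (which needs the realization to extend to a $G$-action) inherits the same defect. Moreover, even for bi-orders your E2 verification is only a sketch: you propose to \emph{choose} $<_G$ so that the large tail of $FQ(X)$ lies in a single conjugacy class, but you never check that any bi-order on $F(X)$ actually has this behaviour, and the statement asks for a construction starting from an \emph{arbitrary} $<_G$.

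The paper proceeds along a completely different, much shorter route. It does \emph{not} restrict $<_G$; instead it builds a lexicographic order $<_Q$ on $FQ(X)$ by first comparing the generator indices $i,j$ in $x_i^U$ versus $x_j^V$, and only when $i=j$ invoking $<_G$ on the non-common suffixes of $U$ and $V$. Property~E2 then follows directly from the index-first comparison (with $M=x_n$, $m=x_1$), and non-isolation is a one-line consequence of McCleary's theorem that $RO(F(X))$ has no isolated points, because $<_G\mapsto<_Q$ is a continuous injection $RO(F(X))\to RQ(FQ(X))$. No dynamical realization, no $\varphi$-perturbation, and no rigidity criterion are needed. Your own diagnosis that the tail-perturbation trick of Theorem~\ref{thm:main3} is blocked by E2 (forcing you to an unspecified ``intermediate scale'' argument) is a symptom that the dynamical route is the wrong tool here.
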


\begin{proof}
    For any reduced words $U,V\in F(X)$, there exists a factorization $U=uw$ and $V=vw$, where the suffix of $u$ does not equal one of $v$. We construct the total order $<_Q$ from $<_G$ as follows:
    \begin{align}
        {}^{uw}x_i<_Q{}^{vw}x_j \stackrel{{\rm def}}{\Longleftrightarrow}\begin{cases*}
            i<_{\mathbb{Z}}j,\\
            i=j \textrm{\, and \,} u<_{G}v,
        \end{cases*}
    \end{align}
    where $<_{\mathbb{Z}}$ is the natural order on $\mathbb{Z}$. The right-invariance of $<_G$ implies that $<_Q$ is a right order. For any ${}^{U'}x_i,{}^{V'}x_j\in FQ(X)$, we have $x_1<_Q{}^{U'}x_i<_Qx_n$, which implies ${}^{V'}x_j*x_1<_Q{}^{V'}x_j*{}^{U'}x_i<_Q{}^{V'}x_j*x_n$, i.e. property E2 is satisfied. It is known by McCleary \cite{MR787955} that $RO(F(X))$ does not have any isolated order. Thus, $<_Q$ is a non-isolated right order.
\end{proof}

\bibliographystyle{plain}
\bibliography{202410ref}

\end{document}